\begin{document}
\newtheorem{theoreme}{Theorem}[section]
\newtheorem{ex}{Example}[section]
\newtheorem{definition}{Definition}[section]
\newtheorem{lemme}{Lemma}[section]
\newtheorem{remarque}{Remark}[section]
\newtheorem{exemple}{Example}[section]
\newtheorem{proposition}{Proposition}[section]
\newtheorem{corolaire}{Corollary}[section]
\newtheorem{hyp}{Hypothesis}[section]
\newtheorem*{rec}{Recurrence Hypothesis}
\newtheorem*{prop}{Proposition}
\newtheorem*{theo}{Theorem}
\newtheorem*{Conj}{Conjecture}
\newcommand\Bound{\partial\overline{X}}
\newcommand\bdf{b_\xi}
\newcommand\N{\mathbb{N}}
\newcommand\Euc{\mathbb{R}^d}
\newcommand\COIn{\tilde{\chi}}
\newcommand\Comp{\overline{X}}
\newcommand\Dir{\varphi}
\newcommand\Lag{\mathcal{L}_0}
\newcommand\ins{\gamma_{uns}}
\newcommand\ext{\mathcal{W}_{0}}
\newcommand\brut{A_7}
\newcommand\inter{\mathcal{W}_2}
\newcommand\close{\mathcal{W}_1}
\newcommand\tins{N_{uns}}
\newcommand\din{\delta_{in}}
\newcommand\inco{\mathcal{DE}_-}
\newcommand\se{\epsilon_{sec}}
\newcommand\Nlag{N_{lag}}
\newcommand\cmin{c_{min}}
\newcommand\ind{J_a^{h,r}}
\newcommand\sor{\mathcal{T}}
\newcommand\secur{\varepsilon_0}
\newcommand\diam{\varepsilon_6}
\newcommand\tim{T_0}
\newcommand\zone{\overline{a}}
\newcommand\exit{N_\epsilon}
\newcommand\wait{N_1}
\newcommand\cli{\varepsilon_7}
\newcommand\nuag{\chi_{7}}
\newcommand\rain{\Pi_7}
\newcommand\sui{A_1^{\leq N}}
\newcommand\symp{\mathcal{C}_0}
\newcommand\zeit{t_2}
\newcommand\petit{\varepsilon_2}
\newcommand\temps{t_5}
\newcommand\set{\mathcal{W}_3}
\newcommand\taille{\varepsilon_2}
\newcommand\bel{>}
\newcommand\R{\mathbb{R}}
\newcommand\C{\mathbb{C}}
\newcommand\Sp{\mathbb{S}^{d-1}}
\newcommand\spt{\mathrm{supp}}
\makeatletter
\renewcommand\theequation{\thesection.\arabic{equation}}
\@addtoreset{equation}{section}
\makeatother
\title{Semiclassical limits of distorted plane waves in chaotic scattering without a pressure condition}
\author{Maxime Ingremeau}
\affil{Laboratoire J.A.Dieudonn\'e, UMR CNRS-UNS 7351, Universit\'e C\^ote
  d'Azur, 06000 Nice, France\thanks{maxime.ingremeau@univ-cotedazur.fr}}

\date{}

\maketitle
\begin{abstract}
In this paper, we study the semi-classical behavior of distorted plane waves, on manifolds that are Euclidean near infinity or hyperbolic near infinity, and of non-positive curvature. Assuming that there is a strip without resonances below the real axis, we show that distorted plane waves are bounded in $L^2_{loc}$ independently of $h$, that they admit a unique semiclassical measure, and we prove bounds on their $L^p_{loc}$ norms.
\end{abstract}

\section{Introduction}

Consider a Riemannian manifold $(X,g)$ of dimension $d\geq 2$ which is \emph{Euclidean near infinity}, that is to say, such that there exists a compact set $X_0\subset X$ and $R_0\bel 0$ such that $(X\backslash X_0,g)$ and $(\mathbb{R}^d\backslash B(0,R_0),g_{eucl})$ are isometric\footnote{More generally, we could consider manifolds with several Euclidean ends, i.e., such that $(X\backslash X_0,g)$ is isometric to $((\mathbb{R}^d\backslash B(0,R_0))^k,g_{eucl})$ for some $k\in \N$. This would add no conceptual change, but would make notations heavier, so we will only write our statements on manifolds with a single infinite end.}. The \emph{distorted plane waves} on $X$ are a family of functions $E_h(x;\xi)$ with parameters $\xi\in\mathbb{S}^{d-1}$ (the direction of propagation of the incoming wave) and $h$ (a semiclassical parameter corresponding to the inverse of the square root of the energy) such that
\begin{equation}\label{eigen}
(-h^2\Delta-1) E_h(x;\xi)=0, 
\end{equation}
and which can be put in the form
\begin{equation}\label{jeanne}
E_h(x;\xi)= E_h^0(x; \xi) + E_{out}.
\end{equation}
Here, 
$$E_h^0(x;\xi) = (1-\chi_0) e^{\frac{i}{h} x\cdot\xi},$$
where $\chi_0\in C_c^\infty$ is such that $\chi_0\equiv 1$ on $X_0$, and $E_{out}(x;\xi,h)$ is \emph{outgoing} in the sense that it satisfies the \emph{Sommerfeld radiation condition}, where $|x|$ is the distance to any fixed point in $X$:
\begin{equation}\label{Sommerfeld}
\lim \limits_{|x|\rightarrow \infty} |x|^{(d-1)/2} \Big{(} \frac{\partial}{\partial |x|} - \frac{i }{h}\Big{)} E_{out} = 0.
\end{equation}

It can be shown (cf. \cite[\S 2]{Mel95} or \cite[\S 4]{DZ19}) that there is a unique function $E_h(\cdot;\omega)$ such that (\ref{eigen}) is satisfied and which can be put in the form (\ref{jeanne}). It does not depend on the choice of the function $\chi_0\in C_c^\infty(X)$, as long as $\chi_0\equiv 1$ on $X_0$.

Actually, the term $E_{out}$ can be given an explicit expression in terms of the \emph{outgoing resolvent}, that is to say, in terms of the family of operators $R_+(z;h) = (-h^2\Delta-z)^{-1}$, which is well defined for $\Im(z)>0$ as an operator from $L^2(X)$ to itself.

It is well-known (see \cite[\S 4 and \S5]{DZ19}) that, if $\chi \in C_c^\infty(X)$, then for any $h>0$, $z \mapsto \chi R_+(z;h)\chi$ can be extended to $\C\backslash (-\infty,0]$ as a meromorphic function. Its poles, which are independent of the choice of $\chi$, are called the \emph{resonances} of $-h^2\Delta$. Since there are no resonances on $[0,\infty)$, $R_+(1;h)$ is well defined as an operator $L^2_{comp}\longrightarrow L^2_{loc}$.

 With $\chi_0$ as in (\ref{jeanne}), we set
\begin{equation*}
F_h(\cdot;\xi)= [h^2\Delta,\chi_0] e^{\frac{i}{h} x\cdot\xi},
\end{equation*}
which is compactly supported, and satisfies $\|F_h\|_{L^2} =O(h)$.

We then have
\begin{equation}\label{eq: def Eout}
E_{out}(\cdot;\xi,h) := R_+(1;h)F_h(\cdot;\xi).
\end{equation}

In this paper, we will be interested in the behavior of $E_h(\cdot;\xi)$ in the semiclassical limit $h\rightarrow 0$. The first question we would like to address is whether $E_h$ is bounded in $L^2_{loc}$ uniformly with respect to $h$. 
More generally, we will be interested in the semiclassical limits of $E_h$, and in the behavior of the $L^p_{loc}$ norms of $E_h(\cdot;\xi)$ as $h\rightarrow 0$.

In \cite{Ing} and \cite{Ing2}, the author answered these questions under some assumptions on the dynamics of the geodesic flow. Let us denote by $p$ the classical Hamiltonian $p: T^*X \ni (x,\xi)\mapsto \|\xi\|_x^2\in \mathbb{R}$.

For each $t\in \mathbb{R}$, we denote by $\Phi^t:T^*X\longrightarrow T^*X$
the geodesic flow generated by $p$ at time $t$. We will denote by the same letter its restriction $\Phi^t : S^*X \longrightarrow S^*X$ to the energy layer $p(x,\xi)=1$.

The \emph{trapped set} is defined as
\begin{equation}\label{defcapte}
K:=\{(x,\xi)\in S^*X; \Phi^t(x,\xi) \text{ remains in a bounded set for all } t\in \mathbb{R}\}.
\end{equation}

One of the main results of \cite{Ing} was the following. Suppose that the trapped set is \emph{hyperbolic}, and that \emph{the topological pressure associated to half the unstable Jacobian is negative}: $\mathcal{P}(1/2)<0$. (see section \ref{averse} for the definition of a hyperbolic set, and of the topological pressure). Then\footnote{Actually, in \cite{Ing}, we also make a transversality assumption on the direction $\xi$, which is always satisfied if $(X,g)$ has non-positive curvature.} $E_h$ is uniformly bounded in $L^2_{loc}(\cdot;\xi)$, and it has a unique semiclassical measure.
In \cite{Ing2}, under the additional assumption that $(X,g)$ has  non-positive curvature, it was shown that $E_h(\cdot;\xi)$ is uniformly bounded in $L^\infty_{loc}$.

The aim of this paper is to extend some of the results of \cite{Ing} and \cite{Ing2} in the case where no assumption is made on the topological pressure associated to half the unstable Jacobian. Instead, we will make the weaker assumption that there is a \emph{resonance-free strip below the real axis}.

\subsubsection*{Resonance-free strip}
In the sequel, we will suppose that there exists $\varepsilon_0, h_0, C_0, >0$, such that for all $0<h<h_0$, $-h^2\Delta$ has no resonances in
\begin{equation}\label{eq: def D}
\mathcal{D}_h:=\Big{\{ }z\in \C ; \Re z \in [1-\varepsilon_0, 1+\varepsilon_0] \text{ and } \Im z \geq - C_0 h\Big{\}}.
\end{equation}
Furthermore, we suppose that there exists $\alpha> 0$ such that the following holds. For any $\chi \in C_c^\infty(X)$, there exists $C_\chi>0$ such that for all $z\in \mathcal{D}_h$,
\begin{equation}\label{eq:aprioribound}
\|\chi R_+(z;h)\chi \|_{L^2\mapsto L^2} \leq C_\chi h^{-\alpha}.
\end{equation}

It was shown in \cite{NZ}, \cite{nonnenmacher2009semiclassical} that (\ref{eq:aprioribound}) holds when the topological pressure $\mathcal{P}(1/2)$ is strictly negative. In \cite{bourgain2016spectral}, (\ref{eq:aprioribound}) was shown to hold on all convex co-compact surfaces, even when the condition $\mathcal{P}(1/2)<0$ is not satisfied. By gluing resolvent estimates thanks to the methods of \cite{DV}, we can modify a convex co-compact surface near infinity, by replacing the hyperbolic funnels by Euclidean ends, so that (\ref{eq:aprioribound}) still holds. Hence, there are some examples of Euclidean near infinity manifolds such that (\ref{eq:aprioribound}) holds, but $\mathcal{P}(1/2)\geq 0$. Actually, it was conjectured in \cite[Conjecture 3, \S 3.2]{zworski2017mathematical} that (\ref{eq:aprioribound}) holds on any Euclidean near infinity manifold with a compact hyperbolic trapped set.

\begin{theoreme}\label{th : L2}
Let $(X,g)$ be a Riemannian manifold which is Euclidean near infinity. We suppose  that $(X,g)$ has nonpositive sectional curvature, that the trapped set is hyperbolic (Hypothesis \ref{sieste}), and that (\ref{eq:aprioribound}) is satisfied.
Let $\xi\in\mathbb{S}^{d-1}$ and $\chi\in C_c^\infty(X)$. Then there exists $C_{\xi,\chi}\bel 0$ such that, for any $h\bel 0$, we have
\begin{equation*}
\|\chi E_h(\cdot,\xi)\|_{L^2}\leq C_{\xi,\chi}.
\end{equation*}
\end{theoreme}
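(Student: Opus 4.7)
Writing $E_h = (1-\chi_0) e^{i x\cdot\xi/h} + E_{out}$, the plane-wave piece contributes a trivially bounded term to $\chi E_h$, so it suffices to estimate $\chi E_{out} = \chi R_+(1;h)F_h$. Combining $\|F_h\|_{L^2}=O(h)$ with (\ref{eq:aprioribound}) directly only yields $\|\chi E_{out}\|_{L^2} = O(h^{1-\alpha})$, which is useless when $\alpha > 1$. The plan is to couple the a priori bound (\ref{eq:aprioribound}) with a semiclassical analysis of the propagation of $F_h$ by the Schr\"odinger propagator $U_h(t) = e^{ith\Delta}$, extending the methods of \cite{Ing} and \cite{Ing2}, while drawing the required decay from the oscillatory factor $e^{it/h}$ and the hyperbolic geometry rather than from the pressure condition $\mathcal{P}(1/2)<0$.

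\paragraph{Propagator identity and short-time term.}
The starting point is the identity, valid for any $T>0$,
\begin{equation*}
R_+(1;h) F_h \;=\; \frac{i}{h}\int_0^T e^{it/h}\, U_h(t) F_h \, dt \;+\; e^{iT/h}\, U_h(T)\, R_+(1;h) F_h,
\end{equation*}
which follows by differentiating $e^{it/h} U_h(t) E_{out}$ in $t$ and using $(-h^2\Delta - 1)E_{out}=F_h$. I would choose $T = \kappa \log(1/h)$ with $\kappa$ large enough that unstable expansion over time $T$ overcomes the loss $h^{-\alpha}$ in (\ref{eq:aprioribound}). The source $F_h=[h^2\Delta,\chi_0]E_h^0$ is a compactly supported Lagrangian distribution of $L^2$-norm $O(h)$, microlocalized on $\Lambda_0 = \{(x,\xi) : x\in \mathrm{supp}(\nabla\chi_0)\}$. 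By Egorov's theorem and WKB, $U_h(t) F_h$ is Lagrangian along the flowed manifold $\Phi^t(\Lambda_0)$; the nonpositive curvature assumption guarantees that $\Lambda_0$ is uniformly transverse to the stable foliation of the trapped set $K$, giving quantitative control on $\Phi^t(\Lambda_0)\cap \mathrm{supp}\,\chi$. A dyadic decomposition in $t$ then estimates the integral via Lagrangian bounds combined with oscillatory-integral cancellations in the time variable.

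\paragraph{Long-time remainder.}
For the remainder $\chi e^{iT/h} U_h(T) R_+(1;h) F_h$, conjugating gives $\chi U_h(T) = U_h(T)\,A_T$ with $A_T = U_h(-T)\chi U_h(T)$; by Egorov applied up to Ehrenfest time, $A_T$ is essentially the quantization of $\chi\circ \Phi^T$. Classical escape of trajectories ensures that $\chi\circ \Phi^T$ is supported in a thin neighborhood of the stable set of $K$, of phase-space size $e^{-\lambda T} = h^{\kappa\lambda}$ for some $\lambda>0$ coming from the unstable Lyapunov exponents. Feeding this into (\ref{eq:aprioribound}) costs $h^{-\alpha}$, and choosing $\kappa$ with $\kappa\lambda > \alpha$ makes this contribution bounded.

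\paragraph{Main obstacle.}
The hardest part is the short-time integral: without the pressure condition, the pointwise $L^2$ mass of $\chi U_h(t) F_h$ does not decay as $t$ grows toward the Ehrenfest time, so one cannot bound the integrand absolutely. The key must be to extract genuine cancellation from the oscillation $e^{it/h}$ via stationary/non-stationary phase in $t$ along the flowed Lagrangian $\Phi^t(\Lambda_0)$. The nonpositive curvature assumption, which ensures uniform non-degeneracy of the relevant phases and a clean global description of $\Phi^t(\Lambda_0)$ near $K$, together with the uniform hyperbolicity of $K$, should be exactly what is needed to carry out such an analysis.
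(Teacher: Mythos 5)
Your starting point is the right one and is essentially what the paper does: the paper's decomposition (\ref{canardsauvage}) is the time-discretized form of your resolvent identity (with step $t_0$ and $N\sim M|\log h|$ steps), and the long-time remainder $(\chi\tilde U(t_0))^N\chi_{t_0}E_h$ is indeed killed by the resonance-free strip. (The paper's mechanism for the remainder is more direct than yours: by Stone's formula and a contour deformation into $\{\Im z\geq -C_0h\}$, $\chi U_h(t)\psi(-h^2\Delta)\chi$ decays like $e^{-C_0t}h^{-\alpha}$, so taking $t=M|\log h|$ with $M$ large suffices. Your Egorov/stable-set argument would need more care to be made quantitative, but it is not the main difficulty.)

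\paragraph{The genuine gap.}
Your treatment of the ``short-time'' integral does not work as stated. You propose to extract cancellation from the oscillation $e^{it/h}$ by stationary/non-stationary phase in $t$. But there is no such oscillation to exploit: $F_h$ is microlocalized on the energy shell $\{|\xi|^2=1\}$, and on that shell $U_h(t)\approx e^{-it/h}$ (to leading order), so $e^{it/h}U_h(t)$ is non-oscillatory in $t$. Indeed the paper's propagator $\tilde U(t_0)=e^{i\frac{t_0}{h}(1-h^2\Delta)}$ already absorbs the $e^{it/h}$ factor precisely because it does \emph{not} produce time oscillation near the energy shell; iterating it produces a sum of Lagrangian states, not an oscillatory integral in $t$. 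So there is no stationary-phase gain in the time variable.

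\paragraph{The missing idea.}
What actually replaces the pressure condition $\mathcal P(1/2)<0$ is the combination of two facts, and your proposal uses neither. First, by Bowen's theorem, $\mathcal P(1)<0$ unconditionally for a hyperbolic set (equation (\ref{eq : pression en 1})). The Jacobian factor in the symbol of a propagated Lagrangian state enters the $L^2$ norm \emph{squared}, so $\sum_{\beta\in B^k}\|a_{\beta,\chi}\|_{L^2}^2\lesssim e^{kt_0(\mathcal P(1)+2\varepsilon_0)}$ is summable in $k$ with no pressure assumption. Second, after regrouping $\beta$'s with identical phase, distinct Lagrangian pieces have phase gradients separated by $\gtrsim e^{-\sqrt{b_0}\,\max(\tilde n(\tilde\beta),\tilde n(\tilde\beta'))}$ (equation (\ref{hurry7})). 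For $\tilde n\leq M^\varepsilon|\log h|$ with $M^\varepsilon<\tfrac{1}{2\sqrt{b_0}}$ this separation is $\gtrsim h^{1/2-\varepsilon}$, so by non-stationary phase in the \emph{spatial} variable (Proposition \ref{nonstat}) the cross-terms $\langle a_{\tilde\beta}e^{i\varphi_{\tilde\beta}/h},a_{\tilde\beta'}e^{i\varphi_{\tilde\beta'}/h}\rangle$ are $O(h^\infty)$. Almost-orthogonality then gives $\|\sum_{\tilde\beta}a_{\tilde\beta}e^{i\varphi_{\tilde\beta}/h}\|_{L^2}^2=\sum_{\tilde\beta}\|a_{\tilde\beta}\|_{L^2}^2+O(h^\infty)$, and the right-hand side is bounded by the $\mathcal P(1)$-estimate. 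This is precisely the point where ``$L^2$ orthogonality $+$ pressure at $s=1$'' is traded for the old ``triangle inequality $+$ pressure at $s=1/2$.'' Without this observation, the time-cutoff identity alone does not close the argument.

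\end{document}
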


We will actually give more precise results about the $L^p_{loc}$ norms and the semi-classical measure of $E_h$ in section \ref{Sec: Statements}.

\subsubsection*{More general framework}
The study of distorted plane waves in the presence of a hyperbolic trapped set is not restricted to the framework of Euclidean near infinity manifolds, though this is the situation we consider in this introduction. Our results will also hold in the case of hyperbolic near infinity manifolds (see the next section for the definition of distorted plane waves in this setting). In particular the analogue of Theorem \ref{th : L2} holds for Eisenstein series on all convex co-compact hyperbolic surfaces, thanks to the results of \cite{bourgain2016spectral}, which guarantee that (\ref{eq: def D}) and (\ref{eq:aprioribound}) hold.

Our results would also hold if (\ref{eq: def D}) and (\ref{eq:aprioribound}) were replaced by the weaker assumption that there are no resonances and that the resolvent is polynomially bounded in the smaller region
$$\mathcal{D}'_h:=\Big{\{ }z\in \C ; \Re z \in [1-\varepsilon_0, 1+\varepsilon_0] \text{ and } \Im z \geq - C_0 \frac{h}{|\log h|^\beta}\Big{\}}$$
for some $\beta\geq 0$. However, since we know no example of such a resonance free strip with $\beta>0$, so we will only work with the assumptions (\ref{eq: def D}) and (\ref{eq:aprioribound}).

Note that, by the methods of \cite{vodev2014semi} (see also \cite{datchev2012extending}), one can show that this assumption amounts to having $\|\chi R_+(1;h)\chi \|_{L^2\mapsto L^2} \leq C_\chi h^{-1} |\log h|^{\beta}$.

\subsubsection*{Relation to other works}
The study of the high frequency behaviour of eigenfunctions of the Laplacian, and of their semiclassical measures, in the case where the associated classical dynamics has a chaotic behaviour, has a long history. It goes back to the classical works \cite{Shn},\cite{Zel} and \cite{CdV} dealing with Quantum Ergodicity on compact manifolds.

Analogous results on manifolds of infinite volume are much more recent. Although distorted plane waves are a natural family of eigenfunctions, they may not be uniformly bounded in $L_{loc}^2$, so that it may not be possible to define their semiclassical measure.

In \cite{DG}, the authors studied the semiclassical measures associated to distorted plane waves in a very general framework, with very mild assumptions on the classical dynamics. The counterpart of this generality is that the authors have to average on directions $\xi$ and on an energy interval of size $h$ to be able to define the semiclassical measure of distorted plane waves. Their result can be seen as a form of Quantum Ergodicity result on non-compact manifolds, although no “ergodicity” assumption is made.

In \cite{GN}, the authors considered the case where $X=\Gamma\backslash \mathbb{H}^{d}$ is a manifold of infinite volume, with sectional curvature constant equal to $-1$ (convex co-compact hyperbolic manifold), and with the assumption that the Hausdorff dimension of the limit set of $\Gamma$ is smaller than $(d-1)/2$. In this setting, distorted plane waves are often called \emph{Eisenstein series}. The authors prove that there is a unique semiclassical measure for the Eisenstein series with a given incoming direction, and they give a very explicit formula for it. This result can hence be seen as a Quantum Unique Ergodicity result in infinite volume.

The results of \cite{GN} were extended to the case of variable curvature in \cite{Ing}, \cite{Ing2}, under the assumption that the topological pressure of half the unstable Jacobian is negative: $\mathcal{P}(1/2)<0$, which naturally generalizes the assumption that the Hausdorff dimension of the limit set of $\Gamma$ is smaller than $(d-1)/2$.

Showing resonance gaps without the assumption $\mathcal{P}(1/2)<0$ is a very delicate issue (see \cite{Nonnen} and \cite{zworski2017mathematical} for a review of the known results and conjectures). Actually, it is not even known that for a general hyperbolic trapping, the resolvent is polynomially bounded on the real axis. The main examples where a resonance-free strip with polynomial bounds on the resolvent is known are convex co-compact hyperbolic surfaces (\cite{bourgain2016spectral}) as well as some families of convex co-compact hyperbolic manifolds of higher dimension (\cite{dyatlov2016spectral}).

In this paper, we will also study the behavior of the $L^p_{loc}$ norms of $E_h$ as $h$ goes to zero. To this end, we will use a method introduced in \cite{hezari2016quantum}, which consists in showing $L^2$ bounds on $E_h$ restricted to balls whose radius depend on $h$.

\subsubsection*{Structure of the paper}
In section \ref{sec2}, we will recall the definition of hyperbolicity and topological pressure, and state our results on distorted plane waves. In particular, we will describe their semiclassical measure, and show bounds on their $L_{loc}^p$ norms. In section \ref{sec:tools}, we shall recall a few facts of classical dynamics which were proven in \cite{Ing} and \cite{Ing2}. We will give the proof of our results in section \ref{sec:proof}. Finally, we shall recall a few facts of semiclassical analysis in appendix \ref{pivoine}.

\subsubsection*{Acknowledgements}

The author would like to thank Stéphane Nonnenmacher and Colin Guillarmou for useful discussion. He is also very thankful to the anonymous referees for their valuable remarks.

The author was partially supported by the Agence Nationale de la Recherche project GeRaSic (ANR-13-BS01-0007-01).
\section{Assumptions and statement of the results}\label{sec2}
Before recalling the definitions of hyperbolicity and of topological pressure, let us recall how distorted plane waves can be constructed on manifolds that are hyperbolic near infinity.
\subsection{The case of hyperbolic  near infinity manifolds}
Our results do not apply only in the case of Euclidean near infinity manifolds, but also in the case of hyperbolic near infinity manifolds. We shall recall here the definition of distorted plane waves on hyperbolic near infinity manifolds. In the framework of convex co-compact hyperbolic manifolds, distorted plane waves are often referred to as \emph{Eisenstein series}.

\begin{definition}\label{def:HypInf}
 We say that $X$ is \emph{hyperbolic near infinity} if it fulfills
the following assumptions.
\begin{enumerate}
\item There exists a compactification $\Comp$ of $X$, that is, a compact
manifold with boundaries $\Comp$ such that $X$ is diffeomorphic to the
interior of $\Comp$. The boundary $\Bound$ is called the boundary at
infinity.

\item There exists a boundary defining function $b$ on $X$, that is, a
smooth function $b : \Comp \longrightarrow [0,\infty)$ such that $b>0$ on
$X$, and $b$ vanishes to first order on $\Bound$.

\item In a collar
neighborhood of $\partial \overline{X}$, the metric $g$ has sectional
curvature $-1$ and can be put in the form
\begin{equation*}g=\frac{\mathrm{d}b^2+h(b)}{b^2},
\end{equation*}
 where $h(b)$ is a smooth 1-parameter family of metrics on $\partial\overline{X}$ for $b\in [0,\epsilon)$.
\end{enumerate}
\end{definition}

\subsubsection*{Construction of $E_{out}$}
Let us fix a $\xi \in \partial \overline{X}$. Since $X$ is hyperbolic near infinity, there exists a neighborhood $\mathcal{V}_\xi$ of $\xi$
in $\overline{X}$ and an isometric diffeomorphism $\psi_\xi$ from
$\mathcal{V}_\xi\cap X$ into a neighborhood $V_{q_0,\delta}$ of the
north pole $q_0$ in the unit ball $\mathbb{B}:=\{q\in \mathbb{R}^d;
|q|<1\}$ equipped with the metric $g_0$:
\begin{equation*}V_{q_0,\delta} :=\{q\in \mathbb{B}; |q-q_0|<\delta\},~~~~ g_0=\frac{4
\mathrm{d}q^2}{(1-|q|^2)^2},
\end{equation*}
where $\psi_\xi(\xi)=q_0$, and $|\cdot|$ denotes the Euclidean length. We
shall choose the boundary defining function on the ball $\mathbb{B}$ to be
\begin{equation}\label{clovis}
b_0=2\frac{1-|q|}{1+|q|},
\end{equation}
and the induced metric $b_0^2 g_0|_{\mathbb{S}^d}$ on $\mathbb{S}^d=
\partial \mathbb{B}$ is the usual one with curvature $+1$. The function
$b_\xi:= b_0\circ \psi_\xi^{-1}$ can be viewed locally as a boundary defining function on $\overline{X}                                                                                                                                                                                                                                                                                                                                                                                                                                                                                                                                                                                                                                                                                                                                                                                                                                                                                                                                                                                                                                                                                                                                                                                                                                                                                                                                                                                                                                                                                                                                                                                                                                                                                                                                                                                                                                                                                                                                                                                                                                                                                                                                                                                                                                                                                                                                                                                                                                                                                                                                                                                                                                                                                                                                                                                                                                                                                                                                                                                                                                                                                                                                                                                                                                                              $.

For each $p\in \mathbb{S}^d$, we define the Busemann function on $\mathbb{B}$
\begin{equation*}\phi_p^{\mathbb{B}}(q)=\log \Big{(}\frac{1-|q|^2}{|q-p|^2}\Big{)}.
\end{equation*}

There exists an $\epsilon>0$ such that the set
\begin{equation*}U_\xi := \{x\in \overline{X}; d_{\overline{g}(x,\xi)<\epsilon}\}
\end{equation*}
lies inside $\mathcal{V}_{\xi}$, where $\overline{g}= \bdf^2g$ is the
compactified
metric.
We define the function
\begin{equation*}\phi_\xi(x):=
\phi^{\mathbb{B}}_{q_0}
\big{(}\psi_{\xi}(x)\big{)}, ~~\text{    for   } x\in U_{\xi}, ~~ 0 \text{ otherwise}.
\end{equation*}

Let $\chi_0:\overline{X}\longrightarrow [0,1]$ be a smooth function which
vanishes outside of $U_\xi$, which is equal to one in a neighborhood
of $\xi$.

The incoming wave is then defined as
\begin{equation*}E_h^0(x;\xi) :=\chi_0(x) e^{((d-1)/2+i/h)\phi_\xi(x)}~~
\text{ if } x\in U_\xi ,~~~~0 \text{ otherwise.} 
\end{equation*}

$E_h^0$ is then a Lagrangian state associated to the Lagrangian manifold 
\begin{equation}\label{eq:DefLag}
\Lambda_\xi= \{ (x,\partial_x \phi_\xi(x)),  x\in U_\xi\}.
\end{equation}

\subsubsection*{Construction of $E_h^1$}
We set $E_{out}:= R_+(1;h) F_h$, where $R_+(1;h)$ is the outgoing resolvent\footnote{which is still supposed to satisfy (\ref{eq:aprioribound}).}  $$\left(-h^2\Delta -\frac{(d-1)^2}{4} h^2 - 1 -i0\right)^{-1},$$ and $F_h:= [h^2\Delta, \chi_0] e^{((d-1)/2+i/h)\phi_\xi(x)}$. 

We then define 
$$E_h := E_h^0 + E_{out}.$$

We refer the reader to \cite[\S 7]{DG} for more details on this construction, and other equivalent definitions of distorted plane waves in this context, showing that our definition is intrinsic.

\subsection{Assumptions on the classical dynamics} \label{averse}
Let $(X,g)$ be a manifold which is Euclidean near infinity, or hyperbolic near infinity.

In the sequel, we will always assume that $(X,g)$ has non-positive sectional curvature. Since the curvature is constant outside of a compact set, we may define
\begin{equation}\label{eq: def b0}
-b_0 \text{ is the minimal value taken by the sectional curvature on } X.
\end{equation}

Let us describe more precisely the hyperbolicity assumption we make. It will imply that $b_0>0$.
\subsubsection*{Hyperbolicity}
For $\rho\in S^*X$, we will say that $\rho\in \Gamma^\pm$ if $\{\Phi^t(\rho), \pm t\leq 0\}$
is a bounded subset of $S^*X$; that is to say, $\rho$ does not “go to
infinity”, respectively in the past or
in the future. The sets $\Gamma^\pm$ are called respectively the
\textit{outgoing} and \textit{incoming} tails.

The \textit{trapped set} is defined as
\begin{equation*}K:=\Gamma^+\cap \Gamma^-.
\end{equation*}
It is a flow invariant set, and it is compact by the geodesic convexity assumption.

\begin{hyp}[Hyperbolicity of the trapped set] \label{sieste}
We assume that $K$ is non-empty, and is a hyperbolic set for the flow
$\Phi^t$. That is to say,
there exists an adapted metric $g_{ad}$ on a neighborhood of $K$ included in
$S^*X$, and $\lambda>0$, such that the following holds. For each
$\rho\in K$, there is a decomposition \begin{equation*}T_\rho(S^*X)=\mathbb{R}\frac{\partial \big{(}\Phi^t(\rho)\big{)}}{\partial t} \oplus E_\rho^+\oplus
E_\rho^-
\end{equation*} such that
\begin{equation*}\|d\Phi_\rho^t(v)\|_{g_{ad}}\leq  e^{-\lambda|t|}\|v\|_{g_{ad}}
\text{   for all } v\in E_\rho^\mp, \pm t\geq 0.
\end{equation*}
\end{hyp}

The spaces $E_\rho^\pm$ are respectively called the \emph{unstable} and \emph{stable} spaces at $\rho$. The \emph{weakly unstable} space at $\rho$ is defined by $E_\rho^{0+}:=\mathbb{R}\frac{\partial \big{(}\Phi^t(\rho)\big{)}}{\partial t} \oplus E_\rho^+$.

We may extend $g_{ad}$ to a metric
on $S^*X$, so that outside of the interaction region $X_0$, it coincides with the restriction of the metric on $T^*X$ induced from the Riemannian metric on $X$. From now on, we will denote by
\begin{equation*}d_{ad} \text{ the Riemannian
distance associated to the metric } g_{ad} \text{ on } S^*X.
\end{equation*}

\subsubsection*{Topological pressure}
 We shall say
that a set $\mathcal{S}\subset K$ is $(\epsilon,t)$-separated if for
$\rho_1, \rho_2\in \mathcal{S}$, $\rho_1 \neq \rho_2$, we have
$d_{ad}(\Phi^{t'}(\rho_1),\Phi^{t'}(\rho_2))>\epsilon$ for some $0\leq t \leq
t'$. (Such a set is necessarily finite.)

The metric $g_{ad}$ induces a volume form $\Omega$ on any $d$-dimensional
subspace of $T(T^*X)$. Using this volume form, we will define
the unstable Jacobian on $K$. For any $\rho\in K$, the determinant map
\begin{equation*}\Lambda^d d\Phi^t(\rho)|_{E_\rho^{+0}} : \Lambda^d E_\rho^{+0}
\longrightarrow \Lambda^d E_{\Phi^t(\rho)}^{+0}
\end{equation*}
can be identified with the real number
\begin{equation*}\det\big{(} \mathrm{d}\Phi^t(\rho)|_{E_\rho^{+0}}\big{)} :=
\frac{\Omega_{\Phi^t(\rho)}\big{(}\mathrm{d}\Phi^tv_1 \wedge
\mathrm{d}\Phi^tv_2\wedge...\wedge \mathrm{d}\Phi^tv_d\big{)}}{\Omega_\rho(v_1\wedge v_2
\wedge... \wedge v_d)},
\end{equation*}
where $(v_1,...,v_d)$ can be any basis of $E_\rho^{+0}$. This number
defines the unstable Jacobian:
\begin{equation}\label{defJaco}
\exp \lambda^+_t(\rho) := \det\big{(}
\mathrm{d}\Phi^t(\rho)|_{E_\rho^{+0}}\big{)}.
\end{equation}
From there, we take
\begin{equation*}Z_t(\epsilon,s):= \sup \limits_{\mathcal{S}} \sum_{\rho \in \mathcal{S}}
\exp(-s\lambda_t^+(\rho)),
\end{equation*}
where the supremum is taken over all $(\epsilon,t)$-separated sets. The
pressure is then defined as
\begin{equation}\label{eq: defpres}
\mathcal{P}(s):= \lim \limits_{\epsilon \rightarrow 0} \limsup \limits_{t
\rightarrow \infty} \frac{1}{t} \log  Z_t(\epsilon,s) .
\end{equation}
This quantity is actually independent of the volume form $\Omega$ and of the metric chosen: after
taking logarithms, a change in $\Omega$ or in the metric will produce a term $O(1)/t$,
which is not relevant in the $t\rightarrow \infty$ limit.

One of the main assumptions made in \cite{Ing} and \cite{Ing2} was that $\mathcal{P}(1/2)<0$. Here, we will use instead the weaker fact, proven in  \cite[Theorem 5.6]{bowen1975ergodic} that
\begin{equation}\label{eq : pression en 1}
\mathcal{P}(1)<0.
\end{equation}

Inequality (\ref{eq : pression en 1}) follows from \cite[Theorem 5.6]{bowen1975ergodic}, and from the fact that $K$ is a basic hyperbolic set, but not an attractor.

In the special case where $X = \Gamma\backslash \mathbb{H}$ is a convex co-compact surface (so that $\Gamma$ is a Fuchsian group of the second kind), if we denote by $\delta(\Gamma)$ the exponent of convergence of $\Gamma$, we have (as recalled in \cite[Appendix B]{DG}) $\mathcal{P}(1)= \delta - 1$, so that  condition (\ref{eq : pression en 1}) corresponds to $\delta(\Gamma)<1$, while the condition $\mathcal{P}(1/2)$ would correspond to $\delta<1/2$. It was shown in \cite{beardon1971inequalities} that the exponent of convergence of a Fuchsian group of the second kind always satisfies $\delta<1$, so that (\ref{eq : pression en 1}) is always satisfied.

To take advantage of this fact, we will introduce in section \ref{sec: other press} another definition of the topological pressure, which was introduced in \cite{NZ}. We refer the reader to this paper for the proof that the two definitions are equivalent.

\subsection{Statement of the results}\label{Sec: Statements}

Our main result is the following.

\begin{theoreme} \label{blacksabbath4}
Let $(X,g)$ be a Riemannian manifold which is Euclidean or hyperbolic near infinity. We suppose  that $(X,g)$ has nonpositive sectional curvature, that the trapped set is hyperbolic (Hypothesis \ref{sieste}), and that (\ref{eq:aprioribound}) is satisfied.

Let $\mathcal{K}\subset X$ be a compact set. There exists $\varepsilon_{\mathcal{K}}>0$ such that, if $\chi\in C_c^\infty(X)$ has a support of diameter smaller than $\varepsilon_{\mathcal{K}}$ included in $\mathcal{K}$, the following holds.

There exists an infinite set $\tilde{\mathcal{B}}^\chi$ and a function $\tilde{n}: \tilde{\mathcal{B}}^\chi\rightarrow \mathbb{N}$ such that the number of elements in $\{\tilde{\beta}\in\tilde{\mathcal{B}}^\chi; \tilde{n}(\tilde{\beta})\leq N\}$ grows at most exponentially with $N$, and such that the following holds.

For any $0<M< \frac{1}{2\sqrt{b_0}}$, for any $\varepsilon>0$, we have

\begin{equation}\label{qad2} \chi E_h(x) = \sum_{\substack{\tilde{\beta}\in \tilde{\mathcal{B}}^\chi\\
                \tilde{n}(\tilde{\beta})\leq M|\log h|}} e^{i \varphi_{\tilde{\beta}}(x)/h}
a_{\tilde{\beta},\chi}(x;h) +  O_{L^2}\Big{(}h^{\frac{M}{2}|\mathcal{P}(1)|-\varepsilon}\Big{)},
\end{equation}
where $a_{\tilde{\beta},\chi}\in S^{comp}(X)$ is a classical symbol in the sense of Definition \ref{defsymbclassique}, each $\varphi_{\tilde{\beta}}$ is a smooth real-valued function defined in a neighborhood of the support of
$a_{\tilde{\beta}}$, and the remainder depends on $\varepsilon$.

Furthermore, for any $\ell,n\in \mathbb{N}$
\begin{equation}\label{sheriff3}
\sum_{\substack{\tilde{\beta}\in \tilde{\mathcal{B}}^\chi\\ \tilde{n}(\tilde{\beta})=n}} \|a_{\tilde{\beta},\chi}\|^2_{C^\ell} \leq C_{\varepsilon,\ell} n^{2\ell}
e^{n(\mathcal{P}(1)+\varepsilon)},
\end{equation}
and there exists a constant $C_\chi>0$ such that for all $\tilde{\beta}\neq  \tilde{\beta}'\in \tilde{\mathcal{B}}^\chi$, with $\tilde{n}(\tilde{\beta})\leq M |\log h|$, we have
\begin{equation}\label{hurry8}
|\partial \varphi_{\tilde{\beta}} (x)- \partial \varphi_{\tilde{\beta'}}(x)|\geq C_\chi  h^{\sqrt{b_0}M}. 
\end{equation}
\end{theoreme}

Actually, our proof shows that we may obtain a smaller remainder, of the order of $h^N$ for any $N$, by taking more terms into account in the sum (i.e. replacing $M$ by some larger constant $M(N)$).

\begin{proof}[Proof that Theorem \ref{blacksabbath4} implies Theorem \ref{th : L2}]

By a partition of unity, it suffices to prove the statement for $\chi$ with a small enough support so that Theorem \ref{blacksabbath4} applies.
For such a $\chi$, we may use Theorem \ref{blacksabbath4} for $M= M^\varepsilon = \frac{1}{2\sqrt{b_0}}- \varepsilon$, and denote by $\mathrm{R}_h$ the remainder in (\ref{qad2}). We obtain
\[
\begin{aligned}
\|\chi (E_h-\mathrm{R}_h)\|_{L^2}^2 &= \Big{\langle} \sum_{\substack{\tilde{\beta}\in \tilde{\mathcal{B}}^\chi\\
                \tilde{n}(\tilde{\beta})\leq M^{\varepsilon}|\log h|}} e^{i \varphi_{\tilde{\beta}}(x)/h}
a_{\tilde{\beta},\chi}(x;h) ,\sum_{\substack{\tilde{\beta}\in \tilde{\mathcal{B}}^\chi\\
                \tilde{n}(\tilde{\beta})\leq M^{\varepsilon}|\log h|}} e^{i \varphi_{\tilde{\beta}}(x)/h}
a_{\tilde{\beta},\chi}(x;h)  \Big{\rangle}_{L^2}\\
&=\sum_{\substack{\tilde{\beta} \in \tilde{\mathcal{B}}^\chi\\
                \tilde{n}(\tilde{\beta})\leq M^{\varepsilon}|\log h|}} \|a_{\tilde{\beta},\chi}(x;h) \|_{L^2}^2 + \sum_{\substack{\tilde{\beta}\neq \tilde{\beta}'  \in \tilde{\mathcal{B}}^\chi\\
                \tilde{n}(\tilde{\beta}), \tilde{n}(\tilde{\beta}')\leq M^{\varepsilon}|\log h|}}  \langle  e^{i \varphi_{\tilde{\beta}}(x)/h}
a_{\tilde{\beta},\chi}(x;h), e^{i \varphi_{\tilde{\beta}'}(x)/h}
a_{\tilde{\beta}',\chi}(x;h) \rangle_{L^2}.
\end{aligned}
\]

Now, by (\ref{hurry8}) and Proposition \ref{nonstat}, each term in the second sum is a $O(h^\infty)$, and since the number of terms is bounded by some power of $h$, the second term is a $O(h^\infty)$. As to the first term, equation (\ref{sheriff3}) implies that it is bounded independently of $h$.
Therefore, $\|\chi (E_h-\mathrm{R}_h)\|_{L^2}$ is bounded independently of $h$. Since $\|\mathrm{R}_h\|_{L^2} = O\Big{(}h^{\frac{|\mathcal{P}(1)|}{4\sqrt{b_0}}-\varepsilon (1+ \mathcal{P}(1))}\Big{)}$, this concludes the proof.
\end{proof}

Theorem \ref{blacksabbath4} also allows us to characterize the semiclassical measure of $E_h$. The proof of the following corollary is very similar to that of Theorem \ref{th : L2}.

\begin{corolaire} \label{blacksabbath3}
We make the same hypotheses as in Theorem \ref{blacksabbath4}.
Let $\chi\in C_c^\infty(X)$ and let $\varepsilon\bel 0$.
Then there exists a finite measure $\mu_\chi$ on $S^*X$ such that we have for any  $\psi\in S^{comp}(T^*X)$
\begin{equation*}\langle Op_h(\psi) \chi E_h, \chi E_h\rangle = \int_{T^*X} \psi(x,\xi)
\mathrm{d}\mu_{\chi}(x,\xi) +
O\Big{(}h^{\min\big{(}1, \frac{|\mathcal{P}(1)|}{4\sqrt{b_0}}-\epsilon \big{)}}\Big{)},
\end{equation*}

Furthermore, if $\chi$ has a small enough support, the measure $\mu_\chi$ is then given by
\begin{equation}\label{eq:ExprMeasure}
\mathrm{d}\mu_{\chi}(x,\xi) = \sum_{\tilde{\beta}\in
\tilde{\mathcal{B}}^\chi} | a_{\tilde{\beta},\chi}^0|^2(x) \delta_{\{\xi=\partial
\varphi_{\tilde{\beta}}(x)\}} \mathrm{d} x,
\end{equation}
where $a_{\tilde{\beta}}$ is as in (\ref{qad2}), and $a_{\tilde{\beta}}^0$ is its principal symbol as defined in Definition \ref{defsymbclassique}.
\end{corolaire}

\begin{proof}
Once again, it suffices to prove the result when $\chi$ has a small enough support. As in the proof of Theorem \ref{th : L2}, we can find $M^\varepsilon$ such that
\[
\begin{aligned}
&\langle Op_h(\psi) \chi E_h, \chi E_h\rangle
=\sum_{\substack{\tilde{\beta} \in \tilde{\mathcal{B}}^\chi\\
                \tilde{n}(\tilde{\beta})\leq M^{\varepsilon}|\log h|}}  \langle Op_h(\psi) e^{i \varphi_{\tilde{\beta}}(x)/h}
a_{\tilde{\beta},\chi}(x;h), e^{i \varphi_{\tilde{\beta}}(x)/h}
a_{\tilde{\beta},\chi}(x;h) \rangle \\
&+ \sum_{\substack{\tilde{\beta}\neq \tilde{\beta}'  \in \tilde{\mathcal{B}}^\chi\\
                \tilde{n}(\tilde{\beta}), \tilde{n}(\tilde{\beta}')\leq M^{\varepsilon}|\log h|}}  \langle Op_h(\psi) e^{i \varphi_{\tilde{\beta}}(x)/h}
a_{\tilde{\beta},\chi}(x;h), e^{i \varphi_{\tilde{\beta}'}(x)/h}
a_{\tilde{\beta}',\chi}(x;h) \rangle +  O\Big{(}h^{\frac{|\mathcal{P}(1)|}{4\sqrt{b_0}}-\varepsilon (1+ \mathcal{P}(1))}\Big{)}.
\end{aligned}
\]

To deal with the second sum, recall that $Op_h(\psi) e^{i \varphi_{\tilde{\beta}}(x)/h}
a_{\tilde{\beta},\chi}(x;h) = e^{i \varphi_{\tilde{\beta}}(x)/h}
b_{\tilde{\beta},\chi}(x;h)$ for some symbol $b_{\tilde{\beta},\chi} \in S^{comp}(X)$. See for instance \cite[Proposition 3.5]{DG} for a proof of this fact. Hence, the proof of Theorem \ref{th : L2} shows that the second sum is $O(h^\infty)$. 

 As to the first sum, we know from \cite[\S 5.1, Example 2]{Zworski_2012} that 
$$\langle Op_h(\psi) e^{i \varphi_{\tilde{\beta}}(x)/h}
a_{\tilde{\beta},\chi}(x;h), e^{i \varphi_{\tilde{\beta}}(x)/h}
a_{\tilde{\beta},\chi}(x;h) \rangle = \int_{T^*X} \psi(x,\xi) | a_{\tilde{\beta},\chi}^0|^2(x) \delta_{\{\xi=\partial
\varphi_{\tilde{\beta}}(x)\}} + O(h \|a_{\tilde{\beta},\chi} \psi\|_{C^{3+2d}}).$$

The result then follows from (\ref{sheriff3}), which also gives us the convergence of the right-hand side in (\ref{eq:ExprMeasure}).
\end{proof}

Finally, Theorem \ref{blacksabbath4} allows us to obtain some bounds on the $L^p$ norms of $E_h$.

\begin{corolaire}\label{cor:borne Lp}
We make the same assumptions as in Theorem \ref{blacksabbath4}. 

Let $r_h:= h^\alpha$, where
\begin{equation}\label{eq:DefAlpha1}
\alpha<\frac{1}{2\left( 1+ \frac{\sqrt{b_0} d}{|\mathcal{P}(1)|}\right)}.
\end{equation}

There exists $C_{\chi,\alpha}$ such that for all $h$ small enough, we have
\begin{equation}\label{eq: borne Linfini}
\|\chi E_h\|_{L^\infty} \leq C_{\chi,\alpha} \Big{(}\frac{h}{r_{h}}\Big{)}^{-(d-1)/2}.
\end{equation}
Let $$p_d := \frac{2(d+1)}{d-1}.$$ There exists $C'_{\chi,\alpha}>0$ such that
\begin{equation}\label{eq:borneLp}
\|\chi E_h\|_{L^{p_d}}\leq C'_{\chi,\alpha} \Big{(}\frac{h}{r_{h}}\Big{)}^{-1/p_d}.
\end{equation}
\end{corolaire}

\begin{remarque}
One could also obtain $L^p$ bounds on $\chi E_h$ for any $p>2$, by interpolation. The special value $p_d$ corresponds to the critical exponent in the Sogge inequalities (see \cite[\S 10.4]{Zworski_2012} and the references therein). Actually, our estimates are analogous to the Sogge estimates for eigenvalues on compact manifolds, except that in the right hand side, the negative power of $h$ is replaced by the same power of $h/r_{h}$. We hence have an improvement of some power of $h$.
\end{remarque}

Corollary \ref{cor:borne Lp} will follow from the following small-scale $L^2$ bound, which has interest on its own. If $x\in X$ and $r>0$, we shall write $B(x,r)$ for the geodesic ball of center $x$ and of radius $r$.

\begin{proposition}\label{prop: small scale}
Let $r_h:= h^\alpha$, where $\alpha$ is as in \eqref{eq:DefAlpha1}.
For any $x_0\in X$, there exists $C>0$ such that
\begin{equation}
\int_{B(x_0,r_h)} |E_h|^2 \leq C r_h^{d}.
\end{equation}
Here, the constant $C$, can be taken independent of $x_0$ if $x_0$ varies in a compact set.
\end{proposition}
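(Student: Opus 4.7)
The plan is to start from the Lagrangian state decomposition \eqref{qad2} produced by Theorem \ref{blacksabbath4}, restrict to the shrinking ball $B(x_0,r_h)$, expand the square into diagonal and off-diagonal contributions, and then kill the off-diagonal terms via non-stationary phase exploiting the separation bound \eqref{hurry8}. Fix a cutoff $\chi\in C_c^\infty(X)$ with support small enough for Theorem \ref{blacksabbath4} to apply and with $\chi\equiv 1$ on a fixed neighborhood of $x_0$, so that $B(x_0,r_h)\subset\{\chi=1\}$ for all $h$ small. Then \eqref{qad2} gives
$$\int_{B(x_0,r_h)} |E_h|^2 \leq 2\int_{B(x_0,r_h)}\Big|\sum_{\tilde n(\tilde\beta)\leq M^\varepsilon|\log h|} a_{\tilde\beta,\chi}\,e^{i\varphi_{\tilde\beta}/h}\Big|^2 + 2\|R_h\|_{L^2}^2,$$
and the remainder is harmless since $\|R_h\|_{L^2}^2 = O(h^{|\mathcal{P}(1)|/\sqrt{b_0}-2\varepsilon})$, which is $O(r_h^d)$ after choosing $\varepsilon$ slightly smaller than in the definition of $r_h$.

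For the diagonal terms I would use the trivial pointwise bound $|a_{\tilde\beta,\chi}|^2\leq \|a_{\tilde\beta,\chi}\|_\infty^2$ together with $\mathrm{Vol}(B(x_0,r_h))\lesssim r_h^d$, reducing the task to showing $\sum_{\tilde\beta}\|a_{\tilde\beta,\chi}\|_\infty^2$ is bounded uniformly in $h$. Since the $C^0$ estimate \eqref{eq: borne C^0 itere} is structurally identical to the $L^2$ estimate \eqref{eq: L2 bound beta}, the argument that produces \eqref{sheriff5} works verbatim with the $L^\infty$ norm in place of the $L^2$ norm; the negativity of $\mathcal{P}(1)$ guaranteed by \eqref{eq : pression en 1} then delivers a convergent geometric sum over $n=\tilde n(\tilde\beta)\in\mathbb{N}$.

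For each off-diagonal pair I would integrate by parts against the phase $\varphi_{\tilde\beta}-\varphi_{\tilde\beta'}$ after multiplying by a smooth cutoff $\psi_{r_h}$ equal to $1$ on $B(x_0,r_h)$, supported in $B(x_0,2r_h)$, with $|\partial^\alpha\psi_{r_h}|\lesssim r_h^{-|\alpha|}$. Each integration by parts produces a factor $h/\eta$ from the phase but loses a factor $r_h^{-1}$ from differentiating $\psi_{r_h}$, so after $N$ steps the pair contributes at most $C_N (h/(\eta r_h))^N r_h^d$, where $\eta$ is the gradient separation. By \eqref{hurry8} and the truncation $\tilde n(\tilde\beta),\tilde n(\tilde\beta')\leq M^\varepsilon |\log h|$ with $M^\varepsilon=\frac{1}{2\sqrt{b_0}}-\varepsilon$, one has $\eta\geq c\,h^{1/2-\varepsilon\sqrt{b_0}}$, hence $h/(\eta r_h)\leq h^{1/2-\lambda_0-O(\varepsilon)}$, a positive power of $h$. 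Choosing $N$ large then makes each cross term smaller than any power of $h$; since the number of pairs is polynomial in $h^{-1}$, the full off-diagonal sum is $O(h^\infty)$.

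The main obstacle is exactly this shrinking-scale integration by parts: the factors $r_h^{-1}$ coming from the cutoff mean that the effective gain per step is $h/(\eta r_h)$ rather than $h/\eta$, so the argument only closes when $\lambda_0<1/2$. This is guaranteed by the definition $\lambda_0=|\mathcal{P}(1)|/(2d\sqrt{b_0})$ together with the fact that $|\mathcal{P}(1)|$ is a small quantity and $d\geq 2$; more conceptually, it is the natural balance between the exponential growth in the number of Lagrangian branches and the polynomial decay of the kernel of $\tilde U(t_0)$ on scales below $h^{1/2}$.
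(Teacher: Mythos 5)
Your proposal follows the paper's proof essentially verbatim: same starting point in Theorem~\ref{blacksabbath4}, same diagonal/off-diagonal split, diagonal handled by the $C^0$ bound~(\ref{eq: borne C^0 itere}) plus the pressure estimate~(\ref{eq : pression en 1}), and the cross terms killed by non-stationary phase on the scale $r_h$. The only cosmetic difference is that the paper performs the change of variables $x=r_h y+x_0$ and then invokes Proposition~\ref{nonstat}, whereas you differentiate the $h$-dependent cutoff $\psi_{r_h}$ directly; these are the same computation and you correctly track that each step costs $h/(\eta r_h)$ rather than $h/\eta$. That observation is actually a point in your favor, since the paper's citation of Proposition~\ref{nonstat} after rescaling is a little loose: the rescaled phase gradient is of size $r_h\eta\lesssim h^{1/2+\lambda_0-O(\varepsilon)}$, which does \emph{not} satisfy the stated hypothesis $\gtrsim h^{1/2-\epsilon}$; what saves the argument is that the Hessian of the rescaled phase is $O(r_h^2)$, so the dangerous $h/|\nabla\Phi|^2$ terms shrink to $h/\eta^2\lesssim h^{2\varepsilon'}$, while the cutoff term gives your $h/(\eta r_h)$. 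Your worst-case accounting is therefore not quite right (there are two competing sources, not one), but both are positive powers of $h$, so the conclusion stands.

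The one genuine weak spot is the justification of $\lambda_0<1/2$. The assertion that ``$|\mathcal{P}(1)|$ is a small quantity'' is not something you can take for granted. What is true is the explicit bound $|\mathcal{P}(1)|\leq (d-1)\sqrt{b_0}$: since the sectional curvature is $\geq -b_0$, the unstable Jacobian satisfies $\lambda_t^+(\rho)\leq (d-1)\sqrt{b_0}\,t$, hence
\begin{equation*}
\mathcal{P}(1)\;\geq\; h_{top}(K)-(d-1)\sqrt{b_0}\;\geq\;-(d-1)\sqrt{b_0},
\end{equation*}
where $h_{top}(K)\geq 0$ is the topological entropy of the flow on $K$. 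This gives $\lambda_0=\frac{|\mathcal{P}(1)|}{2d\sqrt{b_0}}\leq\frac{d-1}{2d}<\frac{1}{2}$, which is the precise form of the margin you need. Replacing your heuristic with this line closes the gap.
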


\begin{remarque}
Using the methods of \cite[\S 6]{Ing2}, it is possible to show that $\int_{B(x_0,r_h)} |E_h|^2$ is also bounded from below by some constant times $r_h^{d}$. Since we won't need the lower bound for our $L^p$ estimates, we will not give a proof here.
\end{remarque}

We will prove Proposition \ref{prop: small scale} and Corollary \ref{cor:borne Lp} in section \ref{sec: proof Lp}.

\section{Facts from classical dynamics}\label{sec:tools}

In this section, we shall recall a few constructions from classical dynamics which will be useful in the proof of Theorem \ref{blacksabbath4}.

Let us start with the alternative definition of topological pressure given in \cite{NZ}.
\subsection{A useful definition of topological pressure}\label{sec: other press}

For any $0<\delta<1$, we define
\begin{equation*}
\begin{aligned}
\mathcal{E}^\delta &:= \{(x,\xi)\in T^*X; |\xi|^2 \in (1-\delta, 1+\delta)\},\\ 
K^\delta &:= \{(x,\xi)\in \mathcal{E}^\delta; \Phi^t(x,\xi) \text{ remains in a bounded set for all } t\in \mathbb{R}\}.
\end{aligned}
\end{equation*}

Let $\mathcal{W}=(W_a)_{a\in A_1}$ be a finite open cover of
$K^{\delta/2}$, such that the $W_a$ are all strictly included in
$\mathcal{E}^\delta$.
For any $T\in \mathbb{N}^*$, define $W(T):=(W_\alpha)_{\alpha \in A_1^T}$ by
\begin{equation*}W_\alpha := \bigcap_{k=0}^{T-1} \Phi^{-k}(W_{a_k}),
\end{equation*} where
$\alpha=(a_0,..,a_{T-1})$.
Let $\mathcal{A}'_T$ be the set of $\alpha\in A_1^T$ such that $W_\alpha\cap
K^\delta \neq \emptyset$.
If $V\subset \mathcal{E}^ \delta$, $V\cap K^{\delta/2} \neq \emptyset$,
define
\begin{equation*}S_T(V) := -\inf_{\rho\in V\cap K^{ \delta/2}} \lambda^+_T(\rho),~~~~~\text{  with } \lambda_T^+\text{ as in (\ref{defJaco})}.
\end{equation*}
\begin{equation*}Z_T(\mathcal{W},s):=\inf \big{\{ } \sum_{\alpha\in \mathcal{A}_T} \exp
\{s S_T (W_\alpha)\} : \mathcal{A}_T\subset \mathcal{A'}_T, K^{\delta/2}
\subset \bigcup_{\alpha \in \mathcal{A}_T} W_\alpha \big{\} }
\end{equation*}
\begin{equation*} \mathcal{P}^\delta (s):= \lim \limits_{diam \mathcal{W} \rightarrow
0} \lim \limits_{T \rightarrow \infty} \frac{1}{T} \log
Z_T(\mathcal{W},s).
\end{equation*}
The topological pressure is then:
\begin{equation} \label{Franck}
\mathcal{P} (s) = \lim \limits_{\delta \rightarrow 0} \mathcal{P}^
\delta (s).
\end{equation}

Let us fix $\epsilon_0>0$ so that $\mathcal{P}(1)+2\epsilon_0 < 0$. Then there exists
$t_0>0$, and $\hat{\mathcal{W}}$ an open cover of $K^\delta$
with $diam (\hat{\mathcal{W}})<\secur$ such that
\begin{equation}\label{Tea}
\Big{|} \frac{1}{t_0} \log Z_{t_0} (\hat{\mathcal{W}},s) -
\mathcal{P}^\delta (s) \Big{|} \leq \epsilon_0.
\end{equation}
We can find $\mathcal{A}_{t_0}$ so that $\{W_\alpha, \alpha \in
\mathcal{A}_{t_0} \}$ is an open cover of $K^\delta$ in
$\mathcal{E}^\delta $ and such that
\begin{equation*} \sum_{\alpha\in \mathcal{A}_{t_0}} \exp \{s S_{t_0} (W_\alpha) \} \leq
\exp \{ t_0 (\mathcal{P}^\delta (s) + \epsilon_0) \} .
\end{equation*}
Therefore, if we take $\delta$ small enough, and if we rename $\{W_\alpha,
\alpha \in \mathcal{A}_{t_0} \}$ as $\{V_b, b \in B_1\}$, we have:
\begin{equation}\label{rainy} \sum_{b\in B_1} \exp \{ S_{t_0} (V_b) \} \leq \exp \{ t_0
(\mathcal{P}(1) + 2\epsilon_0) \} .
\end{equation}

\subsection{Truncated propagation of Lagrangian manifolds}
Let us recall the results given in \cite[\S 2]{Ing} and \cite[\S 4]{Ing2} concerning the propagation of $\Lambda_\xi$ by the geodesic flow. To do this, we have to introduce a nice open covering of $S^*X$.

We may find a finite number of open sets $(V_b)_{b\in B_2 \sqcup \{0\}}$ with $V_{0} = T^*(X\backslash X_0)\cap \mathcal{E}^{\delta}$, with $V_b\cap K = \emptyset$ and $V_b$ is bounded if $b\in B_2$, such that $(V_b)_{b\in B}$ is an open cover of $S^*X$ included in $\mathcal{E}^\delta$,  with $B= B_1\sqcup B_2 \sqcup \{0\}$, and such that the following holds.

\subsubsection*{Truncated Lagrangians}
Let
$N\in\mathbb{N}$, and
let $\beta=(\beta_0,\beta_1,...,\beta_{N-1})\in B^{N}$.
Let $\Lambda$ be a Lagrangian manifold
in $T^*X$. We define the sequence of (possibly empty) Lagrangian manifolds
$(\Phi_\beta^{k}(\Lambda))_{0\leq k \leq N-1}$ by recurrence by:
\begin{equation}\label{deftronque}
\Phi_\beta^{0}(\Lambda)= \Lambda \cap V_{\beta_0},~~~~
\Phi_\beta^{k+1}(\Lambda) =
V_{\beta_{k+1}} \cap  \Phi^{t_0} (\Phi_{\beta}^{k}
(\Lambda)).
\end{equation}

If $\beta\in B^N$, we will often write 
\begin{equation*}
\Phi_\beta (\Lambda) := \Phi_\beta^{N-1}(\Lambda). 
\end{equation*}

For any $\beta\in B^{N}$ such that $\beta_{N-1}\neq 0$, we will define
\begin{equation}\label{rollingstone}
\tau(\beta):= \max \{1\leq i \leq N-1; \beta_i=0\}
\end{equation}
 if there exists 
$1\leq i \leq N-1$ with $\beta_i=0$, and $\tau(\beta)=0$ otherwise.

In the sequel, we will be interested in the truncated propagation of the manifolds $\Lambda_\xi$ indexed by $\xi\in \mathbb{S}^{d-1}$, defined as follows. If $(X,g)$ is hyperbolic near infinity, then $\Lambda_\xi$ is defined as in (\ref{eq:DefLag}), while if $(X,g)$ is Euclidean near infinity, then
\begin{equation}\label{eq:DefLagEucl}
\Lambda_\xi := \{(x,\xi); x\in X\backslash X_0\} \subset S^*X.
\end{equation}

By possibly taking the set $X_0$ bigger, we can assume that 
\begin{equation}\label{invariance}
\forall \xi \in \mathbb{S}^{d-1},~~ \Phi_{0,...,0}(\Lambda_\xi) = \Lambda_\xi.
\end{equation}

The main results of \cite{Ing2} concerning the truncated propagation of $\Lambda_\xi$ can be summed up as follows.

\begin{theoreme}\label{th: rappel classik}
Let $(X,g)$ be a a manifold of non-positive sectional curvature which is Euclidean or hyperbolic near infinity, whose trapped set is hyperbolic, and let $\xi \in \mathbb{S}^{d-1}$.

Let $\mathcal{O}\subset X$ be an open set which is small enough so that we may define local coordinates on it. Then the manifolds $\Phi_\beta(\Lambda_\xi)\cap T^*\mathcal{O}$ satisfy the following properties.

\begin{enumerate}
\item (Finite time away from the trapped set) Let $\mathcal{O'}$ be a bounded open set. There exists $N_{\mathcal{O},\mathcal{O'}}$ such that, for all $N\in \mathbb{N}$ and $\beta\in B^N$, if $\Phi_\beta (\Lambda_\xi \cap \mathcal{O}') \cap \mathcal{O}\neq \emptyset$, then we have
\begin{equation*}
\begin{aligned}
& i\in \{N_{\mathcal{O},\mathcal{O'}},..., N-N_{\mathcal{O},\mathcal{O'}}\} \Longrightarrow \beta_i \in B_1.
\end{aligned}
\end{equation*}

\item (Smooth projection) For any $N\in \mathbb{N}$ and any $\beta\in B^N$, $\Phi_\beta(\Lambda_\xi)\cap (S^*\mathcal{O})$ is a Lagrangian manifold which may be written, in local coordinates, in the form
\begin{equation*}
\Phi_\beta(\Lambda_\xi)\cap T^*\mathcal{O}\equiv \{(x,\partial_x \varphi_{\beta, \mathcal{O}} (x)); x\in \mathcal{O}^\beta\},
\end{equation*}
where $\mathcal{O}^\beta$ is an open subset of $\mathcal{O}$.

Furthermore, for any $\ell\in \mathbb{N}$, there exists a $C_{\ell,\mathcal{O}}\bel 0$ such that for any $N\in \mathbb{N}$, $\beta\in B^N$, we have
\begin{equation}\label{bogota}
\|\partial_x \varphi_{\beta,\mathcal{O}}\|_{C^\ell}\leq C_{\ell,\mathcal{O}}.
\end{equation}

\item (Expansion)
If $x\in \mathcal{O}^\beta$, let us write
\begin{equation}
\Phi^{-Nt_0}(x,\partial_x \varphi_{\beta, \mathcal{O}} (x)) = (g_{\beta,\mathcal{O}}(x), \xi).
\end{equation}

Set \begin{equation}
J_{N,\beta,\mathcal{O}} := \sup\limits_{x\in \mathcal{O}^\beta} \det |g_{\beta,\mathcal{O}}|^{1/2}.
\end{equation}

If $N\in \mathbb{N}$ and $\beta \in B^N$, we shall write $\beta^1$ for the sequence obtained by keeping only the elements of $\beta$ which belong to $B_1$. We have $\beta^1 \in (B_1)^{n_1(\beta)}$ for some $n_1(\beta)\leq N$.

There exists $C>1$ and $N_0>0$ such that for all $N\in \mathbb{N}$ and all $\beta \in B^N$, we have
\begin{equation}\label{eq:BorneJ}
J_{N,\beta,\mathcal{O}} \leq C \exp\Big{[}\sum_{i=0}^{n_1(\beta)} \frac{S_{t_0}(V_{\beta^1_i})}{2}\Big{]}.
\end{equation}

\item (Distance between the Lagrangian manifolds)
There exists a constant $C_\mathcal{O}\bel 0$ such that for any $n,n'\in \mathbb{N}$, for any $\beta\in B^n, \beta'\in B^{n'}$, and for any $x\in\mathcal{O}$, such that $x\in \pi_X\big{(}\Phi_{\beta,\mathcal{O}}(\Lambda_\xi)\big{)}\cap  \pi_X\big{(}\Phi_{\beta',\mathcal{O}}(\Lambda_\xi)\big{)}$,
we have either $\partial \varphi_{\beta,\mathcal{O}}(x) = \partial \varphi_{\beta',\mathcal{O}}(x) $ or 
\begin{equation}\label{hurry}
|\partial \varphi_{\beta,\mathcal{O}} (x)- \partial \varphi_{\beta',\mathcal{O}}(x)|\geq C_1 e^{- t_0 \sqrt{b_0} \max(n-\tau(\beta),n'-\tau(\beta'))}, 
\end{equation}
with $\tau(\beta)$ defined as in (\ref{rollingstone}), and where $b_0$ is as in \eqref{eq: def b0}.
\end{enumerate}
\end{theoreme}

The proof of 1. is given at the beginning of the proof of \cite[Th 3.3]{Ing2}, 2. is \cite[Th 3.3]{Ing2}; 4. is \cite[Lemma 4.10]{Ing2}, while 3. follows from \cite[Proposition 5.2]{NZ}. Note that, in \cite{Ing2}, the author made a rescaling of time to suppose that $t_0=1$.

\section{Proof of Theorem \ref{blacksabbath4}}\label{sec:proof}
In the rest of the paper, we will denote by $U(t) = e^{ith \Delta}$ the semiclassical Schrödinger propagator, and 
$$\tilde{U}(t):= e^{i\frac{t}{h} (h^2\Delta+1)}.$$

The starting point of the proof will be the following decomposition, which is proven in \cite[\S 5.1]{Ing}.
 For any $\chi_1 \in C_c^\infty(X; [0,1])$ such that $\chi_1\equiv 1$ on $X_0$ and any $t_0>0$ large enough, there exists $\chi_{t_0}\in C_c^\infty(X)$ such that, for any $M>0$, the following holds for any $N t_0\leq  M |\log h |$:

\begin{equation}\label{canardsauvage}
\chi_1 E_h =(\chi_1 \tilde{U}(t_0))^N \chi_{t_0} E_h+ \sum_{k=1}^{N} (\chi_1 \tilde{U}(t_0))^k
(1-\chi_1)\chi_{t_0} E^0_h +O_{L^2}(h^\infty).
\end{equation}

The proof of (\ref{canardsauvage}) in \cite{Ing}, which is based on \cite[Lemma 3.10]{DG} did not use the hypothesis on the topological pressure, but merely the hypothesis that $E_h$ is a tempered distribution which follows from the fact that the resolvent is bounded polynomially on the real axis.

The following proposition and its proof are inspired by \cite[\S 3.2]{dyatlov2017fractal}. We will prove it in section \ref{subsec:NoOutside} below. It will be used in section \ref{subsec:Suite} to replace the terms involving $(\chi_1 \tilde{U}(t_0))^k$ in (\ref{canardsauvage}) by terms involving $U( k t_0)$, which are easier to bound using our bounds on the resolvent.

\begin{proposition}\label{prop:NoOutside}
If $t_0$ is chosen large enough, then the following holds. Let $\psi\in C^\infty_c\left( \frac{2}{3}, \frac{3}{2} \right)$. For any $M>0$ and any $N\in \N$ with $Nt_0 \leq M |\log h|$, we have
\begin{equation}\label{eq:NoOutside}
\chi_1 U(N t_0) (1-\chi_1) U(t_0) \chi_1 \psi (-h^2 \Delta) = O_{L^2\to L^2}(h^\infty).
\end{equation}
\end{proposition}

\subsection{Proof of Proposition \ref{prop:NoOutside}}\label{subsec:NoOutside}
\subsubsection{Preliminary constructions}
Since $X$ is hyperbolic or Euclidean near infinity, we may define a boundary defining function $b$ outside of the compact set $X_0$: in the hyperbolic near infinity case, it comes from Definition \ref{def:HypInf}, while in the Euclidean near infinity case, we take $\frac{1}{|x|}$ in the Euclidean part. If $\rho \in T^*X$, we say that $\rho$ is \emph{outgoing} if, for all $t\geq 0$, we have $\pi_X(\Phi^t \rho)\notin X_0$, and the map $[0, + \infty) \ni t \mapsto b \left( \pi_X \Phi^t \rho \right)$ is decreasing.

If $\chi\in C_c^\infty(X)$ satisfies $\chi \equiv 1$ on $X_0$, then there exists $t_0 = t_0(\chi)>0$ such that, for any $\rho = (x,\xi)\in T^*X$ with $|\xi|\in \left[ \frac{1}{2}, 2 \right]$ and $x\in \mathrm{supp}(\chi)$, we have
\begin{equation}\label{eq:TransportSupport}
\pi_X\left(\Phi^{t_0}(\rho)\right) \in \mathrm{supp}(1-\chi) \Longrightarrow \Phi^{t_0}(\rho) \text{ is outgoing and $\forall t \geq 2t_0$, $\pi_X\left(\Phi^{t}(\rho)\right) \notin \mathrm{supp} (\chi)$.}
\end{equation}

This follows from the fact that the projection of the trapped set on the base manifold cannot intersect $\mathrm{supp}(1-\chi)$. 

Let $\chi_1 \in C_c^\infty(X)$ be such that (\ref{canardsauvage}) holds, and let $\chi_2 \in C_c^\infty(X)$ be such that $\mathrm{supp} (\chi_2)$ contains a neighborhood of $\mathrm{supp}(\chi_1)$ and $\mathrm{supp} (1-\chi_2)$ contains a neighborhood of $\mathrm{supp}(1-\chi_1)$.

Let $t_0= t_0(\chi_2)$ be such that (\ref{eq:TransportSupport}) holds for $\chi=\chi_2$. Then (\ref{eq:TransportSupport}) also holds for this $t_0$ and for $\chi = \chi_1$.

Let us write, for $j=1,2$
$$P_{\chi_j} := \left\{\rho= (x,\xi) \in T^* X ; |\xi|\in \big[ \frac{2}{3} - \frac{j}{20}, \frac{3}{2} + \frac{j}{20} \big], x \in \mathrm{supp}(1-\chi_j) \text{ and } \pi_X \left( \Phi^{-t_0}(\rho)\right) \in \mathrm{supp } (\chi_j) \right\}.$$
This is a closed set, $P_{\chi_2}$ contains a neighborhood of $P_{\chi_1}$, and thanks to \eqref{eq:TransportSupport}, we have
\begin{equation}\label{eq:PSort}
\rho\in P_{\chi_j} \Longrightarrow \rho \text{ is outgoing and } \forall t\geq t_0,~  \pi_X\left(\Phi^{t}(\rho)\right) \notin \mathrm{supp} (\chi).
\end{equation}

The proof of Proposition \ref{prop:NoOutside} will follow from the following lemma.

\begin{lemme}\label{lem:Away}
Let $a\in C_c^\infty(T^*X)$ be supported in $P_{\chi_2}$. Then, for any $M>0$ and any $N\in \N$ with $N t_0 \leq M |\log h|$, we have
$$\chi_1 U(N t_0) Op_h(a) = O_{L^2\to L^2}(h^\infty).$$
\end{lemme}

Indeed, let $a \in C_c^\infty(T^*X)$ be supported in $P_{\chi_2}$ with $a\equiv 1$ on $P_{\chi_1}$, and let $\psi\in C^\infty_c\left( \frac{2}{3}, \frac{3}{2} \right)$. It follows from the definition of $P_{\chi_1}$ and from Egorov's theorem (Proposition \ref{prop:Egorov}) that

$$(1-\chi_1) U(t_0) \chi_1 \psi (-h^2 \Delta) = Op_h(a) (1-\chi_1) U(t_0) \chi_1 \psi (-h^2 \Delta) + O_{L^2\to L^2}(h^\infty),$$
so that Proposition \ref{prop:NoOutside} follows. The rest of section \ref{subsec:NoOutside} will be devoted to the proof of Lemma \ref{lem:Away}.

\subsubsection{A parametrix for the propagator}

If $\mathcal{V}\subset T^*X$ is an open set, we shall write 
$$\mathcal{Z}(\mathcal{V}) := \{ \pi_X \Phi^t (\rho') ; t\geq 0, \rho'\in \mathcal{V}\}\subset X.$$

Let $\rho \in P_{\chi_2}$.
 Since $\rho$ is outgoing, we may find an open neighborhood $\mathcal{V}_0$ of $\rho$ such that $\mathcal{Z}(\mathcal{V}_0)$ is simply connected.
 
We may thus define a map
$\Psi : \mathcal{Z}(\mathcal{V}_0) \longrightarrow Y_0$ which is isometric onto its image $\widehat{\mathcal{Z}_0}$, where $Y_0 = \R^d$ if $X$ is Euclidean near infinity, and $Y_0 = \mathbb{H}^d$ if $X$ is hyperbolic near infinity.

Let $\mathcal{V}_1$ and $\mathcal{V}_2$ be open sets containing $\rho$, such that $\overline{\mathcal{V}_2} \subset \mathcal{V}_1\subset \overline{\mathcal{V}_1}\subset \mathcal{V}_0$. 

We shall write \begin{equation}\label{eq:BorneInfDist}
c_1 := d \left( \mathcal{Z}(\mathcal{V}_2), X \backslash \mathcal{Z}(\mathcal{V}_1) \right) = d \left( \mathcal{Z}(\mathcal{V}_2),  \mathcal{Z}(\mathcal{V}_0) \backslash \mathcal{Z}(\mathcal{V}_1) \right)>0.
\end{equation}

To see that $c_1>0$, we start by noting that
there exists $\varepsilon>0$ such that if $\rho\in \mathcal{V}_2$ and $\rho'\in T^*X$ are such that $d(\pi_X \Phi^t(\rho), \Phi^t(\rho'))<\varepsilon$ for all $t\in [-1,0]$, then we have $\rho'\in \mathcal{V}_1$.

Let $x \in \mathcal{Z}(\mathcal{V}_2)$, so that there exists $\xi \in T_x^*X$ and $\tau \leq 0$ with $\Phi^{\tau}(x,\xi) \in \mathcal{V}_2$. Let $x'\in \mathcal{Z}(\mathcal{V}_0) \backslash \mathcal{Z}(\mathcal{V}_1)$ be such that the distance between $x$ and $x'$ is smaller than the injectivity radius of $X$. Since the universal cover of $X$ has non-positive curvature, we deduce that there exists a (unique) $\xi'\in T_{x'}^*X$ such that the map $(-\infty, 0]\ni t\mapsto d\left( \pi_X \Phi^t (x,\xi), \pi_X \Phi^t (x',\xi') \right)$ is non-decreasing.

In particular, if $d(x,x')<\varepsilon$, then we have $d\left( \pi_X \Phi^{\tau +t} (x,\xi), \pi_X \Phi^{\tau+t} (x',\xi') \right)<\varepsilon$ for all $t\in [-1,0]$, so that $\Phi^\tau(x',\xi')\in \mathcal{V}_1$, which contradicts the fact that $x'\notin \mathcal{Z}(\mathcal{V}_1)$. Hence, we must have $d(x,x')>\varepsilon$, which gives us (\ref{eq:BorneInfDist}).

Since we also have $d \left( \mathcal{Z}(\mathcal{V}_1), X \backslash \mathcal{Z}(\mathcal{V}_0) \right)>0$, we may find $\psi_0\in C^\infty(\mathcal{Z}(\mathcal{V}_0))$ equal to one on $\mathcal{Z}(\mathcal{V}_1)$, and such that $\|\psi_0\|_{C^2(\mathcal{V}_0)} < + \infty$.

Note that $ P_{\chi_2}$ is compact, so that it may be covered by finitely many sets $\mathcal{V}_2$ as above. By using a partition of unity argument, and the fact that the quantization $Op_h$ is linear, we may suppose that the function $a$ appearing in Lemma \ref{lem:Away} satisfies $\mathrm{supp}(a) \subset \mathcal{V}_2$.

Let $U^0(t)$ denote the free Schrödinger propagator on $Y_0$. The operator $\psi_0 (\Psi^{-1})^* U^0(t) \Psi^* \psi_0 : L^2(X) \longrightarrow L^2(X)$ is thus well-defined.

 We claim that, for any $M>0$ and any $0\leq t \leq M |\log h|$, we have
\begin{equation}\label{eq:Parametrix}
 U(t) Op_h(a) = \psi_0 (\Psi^{-1})^* U^0(t) \Psi^* \psi_0  Op_h(a) + O_{L^2 \to L^2} (h^\infty),
\end{equation}
with the remainder bounded independently of $t$.

We have $i \frac{\mathrm{d}}{\mathrm{d}t}  \left[\psi_0 (\Psi^{-1})^* U^0(t) \Psi^* \psi_0 \right] = -h^2 \Delta \psi_0 (\Psi^{-1})^* U^0(t) \Psi^* \psi_0 + \left[ \psi_0, h^2 \Delta \right] (\Psi^{-1})^* U^0(t) \Psi^* \psi_0$. Hence, by Duhamel's principle, we have
$$\psi_0 (\Psi^{-1})^* U^0(t) \Psi^* \psi_0 = U(t) + \int_0^t U(t-s) \left[ \psi_0, h^2 \Delta \right] (\Psi^{-1})^* U^0(s) \Psi^* \psi_0 \mathrm{d}s.$$

Therefore, (\ref{eq:Parametrix}) will follow if we can show that, for any $a\in C_c^\infty (\mathcal{V}_2)$, any $k\in \N$ and any $t\in [ h^k, M |\log h| ]$, we have
\begin{equation}\label{eq:BabyParametrix}
\left[ \psi_0, h^2 \Delta \right] (\Psi^{-1})^* U^0(s) \Psi^* \psi_0 Op_h(a) = O_{L^2 \to L^2}(h^\infty).
\end{equation}

We will also show that, for any $N\in \N$, with $Nt_0 \leq M |\log h|$, we have
\begin{equation}\label{eq:BabyParametrix2}
\chi_1 \psi_0 (\Psi^{-1})^* U^0(N t_0) \Psi^* \psi_0 Op_h(a) = O_{L^2 \to L^2}(h^\infty).
\end{equation}

Lemma \ref{lem:Away} then follows from equations (\ref{eq:Parametrix}) and (\ref{eq:BabyParametrix2}), and hence, from equations \eqref{eq:BabyParametrix} and \eqref{eq:BabyParametrix2}. Proving these two equations is the goal of the next paragraph.

\subsubsection{Long time estimates for the free propagator}

Let us denote by $u_h^0(t,x,y)$ the integral kernel of $U^0(t)$. It takes the form $u_h^0(t, d_{Y_0}(x,y))$, where
\begin{equation*}
\begin{aligned}
u^0_h(t,\delta) &= \frac{1}{(4 i \pi t h)^{d/2}} e^{\frac{i}{h} \frac{\delta^2}{4t}} &\text{ when $Y_0= \mathbb{R}^d$}\\
u^0_h(t, \delta) &= \frac{c}{\sqrt{|th|}} e^{-ith \frac{(d-1)^2}{4}} \left( \frac{\partial_\delta}{\sinh \delta} \right)^{\frac{d-1}{2}} e^{\frac{i}{h} \frac{\delta^2}{4t}}  ~~ &\text{ when $Y_0 = \mathbb{H}^d$ with $d$ odd }\\
 u^0_h(t, \delta)&= \frac{c}{|th|^{3/2}} e^{-ith \frac{(d-1)^2}{4}} \left(  \frac{\partial_\delta}{\sinh \delta} \right)^{\frac{d-2}{2}} \int_{0}^{+\infty} \frac{e^{\frac{i}{h} \frac{(s+\delta)^2}{4t}} (s+\delta)}{\sqrt{\cosh (s+\delta) - \cosh \delta}} \mathrm{d}s  ~~ &\text{ when $Y_0 = \mathbb{H}^d$ with $d$ even.}
 \end{aligned}
 \end{equation*}
The expression of the free Schrödinger propagator on $\R^d$ is standard, while the expression on $\mathbb{H}^d$ was first proven in \cite{Ban07}. 

Therefore, thanks to (\ref{eq:DefQuantification}), we see that the operator $(\Psi^{-1})^* U^0(s) \Psi^* \psi_0 Op_h(a)$ has integral kernel
\begin{equation}\label{eq:IntKernel}
K_h(x,z; t)= \frac{1}{(2\pi h)^d} \sum_j \int_{T^* X} u_h(t,x,y)  e^{\frac{i}{h} (\gamma_j(y)-\gamma_j(z))\cdot (\partial \gamma_j (y)^T) (\xi) } \chi_j(y)\chi_j(z) a(y, \xi )g_{\gamma_j}(z,\xi) \mathrm{d}y \mathrm{d}\xi,
\end{equation}
where the $\chi_j$ and the maps $\gamma_j$  depend on the choice made to define the quantization $Op_h$, as explained in Appendix \ref{greve}. The sum above contains only finitely many non-zero terms, and, to lighten notations, we will forget the dependence in $j$ in the sequel.

Let us write $K_h^{z,t}(x) := K_h(x, z ; t)$. Equation (\ref{eq:BabyParametrix}) will follow if we can prove that, for any $k\in \N$, any $M>0$, we have for all $t\in [ h^k, M |\log h| ]$
\begin{equation}\label{eq:NegligibleKernel}
\|K_h^{z,t}\|_{C^2 (\mathrm{supp}(\partial \psi_0))} = O(h^\infty),
\end{equation}
while, to prove (\ref{eq:BabyParametrix2}), it suffices to show that, for all $N\in \N$ with $Nt_0\leq M |\log h|$, we have
\begin{equation}\label{eq:NegligibleKernel2}
\|K_h^{z,Nt_0}\|_{C^0 (\mathrm{supp}(\chi_1 \psi_0))} = O(h^\infty),
\end{equation}

\subsubsection*{The case of $\R^d$ or $\mathbb{H}^d$ with $d$ odd}
For any $t>0$, we can write 
\begin{equation*}
K_h(x, z ; t) = c(t)  \int_{T^* X} b(t,x, z, \xi) e^{\frac{i}{h} \phi_{t,x,z}(y,\xi)} \mathrm{d}y \mathrm{d}\xi
\end{equation*}
with $|c(t)|=1$, the function $b$ being a finite sum of negative powers of $ht$ times smooth, compactly supported functions in $z$ and $\xi$, with all their derivatives bounded independently of $x\in X$, and the function $\phi_{t,x,z}(y,\xi)$ taking the form
$$\phi_{t,x,z}(y,\xi) =  (\gamma(y)-\gamma(z))\cdot (\partial \gamma (y)^T) (\xi) + \frac{d(x,y)^2}{4t}.$$

The gradient of $\phi_{t,x,z}$ can thus be written as 
$$\nabla \phi_{t,x,z}(y,\xi) = \left( \frac{d(x,y)}{2t} v_{x,y} + \hat{\xi}, \partial \gamma(y) (\gamma(y) - \gamma(z))\right),$$ 
where $\hat{\xi}$ is the vector of $T_yX$ corresponding to $\xi\in T^*_y X$ by the canonical identification, and where $v_{x,y}$ is the unit vector in $T_yX$ such that $-v_{x,y}$ points in the direction of $x$.

In particular, we see that, for $\nabla \phi_{t,x,z}$ to be zero, we must have $\pi_X \Phi^t(y,\xi)= x$. This cannot happen in the two situations we are interested in:
\begin{itemize}
\item $t>0$, $(y,\xi) \in \mathrm{supp} (a)$ and $x\in \mathrm{supp} (\partial \psi_0)$. Actually, it follows from (\ref{eq:BorneInfDist}) that we may find $c_2>0$ such that, for any $t>0$, $(y,\xi) \in \mathrm{supp} (a)$ and $x\in \mathrm{supp} (\partial \psi_0)$, we have
\begin{equation}\label{eq:LowerGrad}
\left|\nabla \phi_{t,x,z}(y,\xi) \right| \geq c_2 \left( 1 + \frac{d(x, \pi_X (\mathcal{V}_2))}{t} \right).
\end{equation}
\item $t\geq t_0$, $(y,\xi) \in \mathrm{supp} (a)$ and $x\in \mathrm{supp}(\chi_1)$. Indeed, it follows from (\ref{eq:TransportSupport}) and the definition of $P_{\chi_1}$ that there exists $c_3>0$ such that for all $(y,\xi) \in \mathrm{supp} (a)$, all $x\in \mathrm{supp}(\chi_1)$ and all $t\geq t_0$, we have
\begin{equation}\label{eq:LowerGrad2}
\left|\nabla \phi_{t,x,z}(y,\xi) \right| \geq c_3.
\end{equation}
\end{itemize}

Therefore, we deduce by integration by parts that for all $t> h^k$ for some $k>0$, and for all $x\in \mathrm{supp} (\partial \psi_0)$, we have

$$K_h(x, z ; t) = O(h^\infty),$$
with the remainder depending on $k$, but not on $x\in \mathrm{supp} (\partial \psi_0)$ or on $t$.

The derivatives of $K_h$ with respect to $x$ are of the form 
$$ \partial^\alpha_x K_h(x, z ; t) = c(t)  \int_{T^* X} b_\alpha(t,x, z, \xi) e^{\frac{i}{h} \phi_{t,x,z}(y,\xi)} \mathrm{d}y \mathrm{d}\xi,$$
where $b_\alpha$ is a finite sum of negative powers of $ht$ times smooth, compactly supported in $z$ and $\xi$, which are bounded, as well as all their derivatives, by powers of $(1 + \frac{d(x, \pi_X (\mathcal{V}_2))}{t})$. Hence, integrations by parts using (\ref{eq:LowerGrad}) give us (\ref{eq:NegligibleKernel}).

The proof of (\ref{eq:NegligibleKernel2}) is exactly the same, using (\ref{eq:LowerGrad2}) instead of (\ref{eq:LowerGrad}).

\subsubsection*{The case of $\mathbb{H}^d$ with $d$ even}
The proof is very similar as in the previous cases. For any $t>0$, we can write 
\begin{equation*}
K_h(x, z ; t) = c(t) \int_{0}^{+\infty} \mathrm{d}s  \int_{T^* X} b_s(t,x, z, \xi) e^{\frac{i}{h} \phi_{s,t,x,z}(y,\xi)} \mathrm{d}y \mathrm{d}\xi,
\end{equation*}
with $\phi_{s,t,x,z}(y,\xi) =  (\gamma(y)-\gamma(z))\cdot (\partial \gamma (y)^T) (\xi) + \frac{(s+d(x,y))^2}{4t}$, and $b_s$ being a finite sum of negative powers of $ht$ times compactly supported functions in $(z,\xi)$ which are bounded, as well as all their derivatives, polynomially in $s$ and $d(x, \pi_X \mathcal{V}_2)$.

We thus have the existence of $c_2>0$ such that for any $s,t>0$, $(y,\xi) \in \mathrm{supp} (a)$ and $x\in \mathrm{supp} (\partial \psi_0)$, we have
 $\left|\nabla \phi_{t,x,z}(y,\xi) \right| \geq c_2 \left( 1 + \frac{d(x, \pi_X (\mathcal{V}_2))}{t} + s \right)$. We may thus conclude that for all $t> h^k$ for some $k>0$, and for all $x\in \mathrm{supp} (\partial \psi_0)$, we have
$$\int_{T^* X} b_s(x, z, \xi) e^{\frac{i}{h} \phi_{s,t,x,z}(y,\xi)} \mathrm{d}y \mathrm{d}\xi \leq \frac{O(h^\infty)}{s^2},$$
with the $O(h^\infty)$ independent of $s$, of $x\in \mathrm{supp} (\partial \psi_0)$ and of $t$. Equation (\ref{eq:NegligibleKernel}) follows. The proof of (\ref{eq:NegligibleKernel2}) is very similar.

\subsection{Decay of the truncated propagator}\label{subsec:Suite}
Let $\chi\in C_c^\infty(X)$, and let $\chi_1 \in C_c^{\infty}$ be such that $\chi_1 \equiv 1$ on $X_0 \cup \mathrm{supp}(\chi)$. Let $t_0>0$ be large enough so that Proposition \ref{prop:NoOutside} holds.
 By (\ref{canardsauvage}), we may find $\chi_{t_0} \in C_c^\infty(X)$ such that 
\begin{equation}\label{canardsauvage12}
\chi E_h = \chi (\chi_1 \tilde{U}(t_0))^N \chi_{t_0} E_h+ \sum_{k=1}^{N} \chi (\chi_1 \tilde{U}(t_0))^k
(1-\chi_1)\chi_{t_0} E^0_h +O_{L^2}(h^\infty).
\end{equation}

The aim of this section is to show that, in equation (\ref{canardsauvage12}), the term $(\chi_1 \tilde{U}(t_0))^N \chi_{t_0} E_h$ is negligible. To this end, we start by stating a lemma which comes from \cite{ingremeau2017sharp}, and whose proof we recall. It shows that, if we wait for times which are large enough multiples of $|\log h|$, the propagator restricted to a compact set becomes smaller than any power of $h$. 

\begin{lemme}\label{lem:PropagDecay}
Let $\chi_1, \chi_2\in C_c^\infty(X)$, $\psi\in C_c^\infty(\R)$. For any $r>0$, there exists $M_{r}>0$ and $C_{r}>0$ such that 
$$\|\chi_1 U(M_{r} |\log h|)  \psi(-h^2 \Delta) \chi_2\|_{L^2\rightarrow L^2} \leq C_{r} h^{r}.$$  
\end{lemme}

\begin{proof}
Let us consider the incoming resolvent $R_-(z;h) := (-h^2\Delta- z)^{-1}$, which is analytic for $-\Im z >0$.
Using Stone's formula, we obtain that for any $t>0$, we have
\begin{equation*}
\chi_1 U(t)  \psi(-h^2 \Delta) \chi_2 = \frac{1}{2i \pi} \int_{\R} e^{-it z/h} \chi_1 \big{(}R_-(z;h) -  R_+(z;h)\big{)} \psi(z) \chi_2 \mathrm{d}z.
\end{equation*}

Let $\tilde{\psi}$ be an almost analytic extension of $\psi$, that is to say, a function $\tilde{\psi}\in C_c^\infty(\C)$ such that 
\begin{equation}\label{eq: almost analytic}
\partial_{\overline{z}} \tilde{\psi} (z) = O\big{(}(\Im z)^\infty\big{)}
\end{equation}
 and such that $\tilde{\psi}(z)= \psi(z)$ for $z\in \R$. We may furthermore assume that $$\mathrm{spt }~ \tilde{\psi}\subset \{z; \Re z \in \mathrm{spt}~\psi\}.$$
We refer the reader to \cite[\S 2]{martinez2002introduction} for the construction of such a function.

Using Green's formula, we obtain that 
\begin{equation*}
\begin{aligned}
\chi_1 U(t)  \psi(-h^2 \Delta) \chi_2 &= \frac{1}{2i \pi} \int_{\Im z = -C_0 h} e^{-it z/h} \chi_1 \big{(}R_-(z;h) -  R_+(z;h)\big{)} \tilde{\psi}(z) \chi_2 \mathrm{d}z\\
&+ \frac{1}{2i \pi} \int_{-C_0 h\leq \Im z \leq 0} e^{-it z/h} \chi_1 \big{(}R_-(z;h) -  R_+(z;h)\big{)} \partial_{\overline{z}} \psi(z) \chi_2 \mathrm{d}z.
\end{aligned}
\end{equation*}

Thanks to (\ref{eq:aprioribound}) and to (\ref{eq: almost analytic}), the second term is $O(h^\infty)$, independently of $t$. On the other hand, by (\ref{eq:aprioribound}), the first term is bounded by $C e^{-C_0 t } h^{-\alpha}$. Therefore, taking $t= M |\log h|$ with $M$ large enough proves the lemma.
\end{proof}

We would like to use Lemma \ref{lem:PropagDecay} to obtain bounds on $(\chi_1 \tilde{U}(t_0))^N \chi_{t_0} E_h$.

First of all, note that, since $\chi_1 \equiv 1$ on $\mathrm{supp}(\chi)$, we have 
\begin{equation}\label{eq:DecompoProp}
\chi U(N t_0) - \chi (\chi_1 U(t_0))^N = \sum_{\ell=0}^{N-2} \chi  U((N-\ell-1)t_0) (1-\chi_1)  U(t_0)  (\chi_1 U(t_0) )^\ell.
\end{equation}

Let $\varepsilon_0$ be as in section \ref{sec: other press}, small enough so that $(1-3 \varepsilon_0, 1+3 \varepsilon_0) \subset \left( \frac{2}{3}, \frac{3}{2}\right)$.
Let us fix a function $\psi\in C^\infty_c(1-\varepsilon_0, 1+\varepsilon_0)$ such that $\psi(x) = 1$ for $x\in (1-\varepsilon_0/2, 1+\varepsilon_0/2)$, and $\psi_1\in C^\infty_c(1- 3\varepsilon_0, 1+3\varepsilon_0)$ such that $\psi_1(x) = 1$ for $x\in (1-2\varepsilon_0, 1+2\varepsilon_0)$.

For any $1 \leq \ell \leq N-2$, we have (using Proposition \ref{prop:Egorov})
\begin{align*}
&\chi  U((N-\ell-1)t_0) (1-\chi_1)  U(t_0)  (\chi_1 U(t_0) )^\ell \psi(-h^2\Delta) \chi_{t_0}\\
 &= \chi  U((N-\ell-1)t_0) (1-\chi_1)  U(t_0) \chi_1 \psi_1(-h^2\Delta) U(t_0) (\chi_1 U(t_0) )^{\ell-1} \psi(-h^2\Delta) \chi_{t_0}  + O_{L^2\to L^2}(h^\infty)\\
&= O_{L^2\to L^2}(h^\infty)
\end{align*}
thanks to (\ref{eq:NoOutside}). We deduce from (\ref{eq:DecompoProp}) that
\begin{equation}\label{eq:WithOrWithoutCutoff}
\begin{aligned}
e^{iNt_0/h}\chi (\chi_1 \tilde{U}(t_0))^N \psi(-h^2\Delta) \chi_{t_0}  &= \chi U(N t_0) \psi(-h^2\Delta) \chi_{t_0} \\
&-  \chi_1  U((N-1)t_0) (1-\chi_1)  U(t_0)  \psi(-h^2\Delta)\chi_{t_0} + O(h^\infty).
\end{aligned}
\end{equation}

Therefore, the following corollary follows from (\ref{eq:WithOrWithoutCutoff}) and Lemma \ref{lem:PropagDecay}.
\begin{corolaire}\label{infinitif} Let $r>0$. We may find a constant
$M_{r}\geq 0$ such that for any $M > M_{r}$, for any $M_{r}|\log h|
\leq Nt_0 \leq M |\log h|$, we have:
\begin{equation*}\|\big{(}\chi_1 \tilde{U}(t_0)\big{)}^N
\psi(-h^2 \Delta) \chi_{t_0}\|_{L^2}=O(h^r).
\end{equation*}
\end{corolaire}

Now, recall that, by (\ref{eq: def Eout}) combined with (\ref{eq:aprioribound}), we have that $\|\chi_{t_0} E_h\|_{L^2} = O (h^{-\alpha})$ for some $\alpha>0$. Hence, the proof of \cite[Theorem 6.4]{Zworski_2012} shows that for any $\chi_2 \in C_c^\infty(X)$, we have
$$\big{(}1-\psi(-h^2\Delta) \big{)} \chi_2 E_h = O(h^\infty).$$

Therefore, we deduce from equation (\ref{canardsauvage12}) and Corollary \ref{infinitif} that
\begin{equation}\label{canardsauvage2}
\chi E_h = \sum_{k=1}^{M_r |\log h|} \chi (\chi_1 \tilde{U}(t_0))^k
(1-\chi_1)\chi_{t_0} E^0_h +O_{L^2}(h^r).
\end{equation}

We now have to decompose the propagators in order to take advantage of Theorem \ref{th: rappel classik}.

\subsection{Microlocal partition}\label{partition}
We take a partition of unity $\sum_{b\in B} \pi_b$ such that:
\begin{equation*}\sum_{b\in B} \pi_b(\rho)\equiv 1 \text { for all } \rho\in
\mathcal{E}^{\delta'},
\end{equation*} and $\mathrm{supp} (\pi_b) \subset V_b \subset
\mathcal{E}^\delta$ for all $b\in B$ and for some $0<\delta'<\delta$.

For $b\in B_1\cup B_2$, we set $\Pi_b:= Op_h(\pi_b)$. We have
\begin{equation*}WF_h(\Pi_b) \subset V_b\cap \mathcal{E}^\delta.
\end{equation*}

We then set 
\begin{equation*}
\Pi_0:= Id- \sum_{b\in B_1\cup B_2} \Pi_b.
\end{equation*}

We can decompose the propagator at time $t_0$ into:
\begin{equation*}\chi_1\tilde{U}(t_0)= \sum_{b\in B} \tilde{U}_b, \text{      where
 } \tilde{U}_b:=  \chi_1 \Pi_b e^{it_0/h} U(t_0).
\end{equation*}
The propagator at time $t_0$ may then be decomposed as follows:
\begin{equation}\label{triceratops}(\chi_1 \tilde{U}(t_0))^N= \sum_{\beta\in B^{N}}
\tilde{U}_{\beta},
\end{equation}
where $\tilde{U}_\beta := \tilde{U}_{\beta_{N-1}}\circ...\circ
\tilde{U}_{\beta_0}$. 

\subsection{Truncated propagation of Lagrangian states}
Let us fix an open set $\mathcal{O}\subset X$ small enough so that we may define local coordinates on it. Let us fix $\chi\in C_c^\infty(\mathcal{O})$, and choose $\chi_1$ equal to one on $\mathcal{O}$. From (\ref{canardsauvage2}) and (\ref{triceratops}), we have
\begin{equation}\label{canardsauvage4}
\chi E_h = \sum_{k=1}^{M_r |\log h|} \sum_{\beta \in B^k} \chi \tilde{U}_\beta
(1-\chi_1)\chi_{t_0} E^0_h +O_{L^2}(h^r).
\end{equation}

We may apply Proposition \ref{haroun} to describe $\chi\tilde{U}_\beta
(1-\chi_1)\chi_{t_0} E^0_h$. Indeed, the operator $\tilde{U}_\beta$ is of the form $(S_{i_N}\circ...\circ S_{i_1})$ with $S_{i_k}$ of the form (\ref{blabla}), and $E_h^0$ is a Lagrangian state associated to the Lagrangian $\Lambda_\xi$ defined by (\ref{eq:DefLag}) when $X$ is hyperbolic near infinity, and by (\ref{eq:DefLagEucl}) when $X$ is Euclidean near infinity.

 By the second point of Theorem \ref{th: rappel classik}, conditions (\ref{leffe}) and (\ref{eq:Caseprojettebien}) are satisfied. We obtain that

\begin{equation}
\chi \tilde{U}_\beta (1-\chi_1)\chi_{t_0} E^0_h = a_{\beta,\chi} e^{\frac{i}{h} \varphi_{\beta,\mathcal{O}}},
\end{equation}
with $a_{\beta,\chi}\in S^{comp}(\mathcal{O})$ a classical symbol which satisfies for any $\ell\in \mathbb{N}$
\begin{equation}\label{eq: borne C^0 itere}
\|a_{\beta,\chi}\|_{C^\ell}\leq C_\ell N^\ell J_{N,\beta,\mathcal{O}} \leq C_\ell N^\ell \exp\Big{[}\sum_{i=0}^{n_1(\beta)} \frac{S_{t_0}(V_{\beta^1_i})}{2}\Big{]},
\end{equation}
where $C_\ell$ does not depend on $\beta$ or $N$. Here, we used (\ref{eq:BorneJ}) in the second inequality.

In particular, we have
\begin{equation}\label{eq: L2 bound beta}
\|a_{\beta,\chi}\|^2_{C^\ell}\leq C'_\ell N^{2\ell} \exp\Big{[}\sum_{i=0}^{n_1(\beta)} S_{t_0}(V_{\beta^1_i})\Big{]}.
\end{equation}

Thanks to the first point in Theorem \ref{th: rappel classik} and to Proposition \ref{prop:Egorov}, we have that if $\beta\in B^k$ is such that $\chi \tilde{U}_\beta
(1-\chi_1)\chi_{t_0} E^0_h \neq O(h^\infty)$, then we must have $n_1(\beta)\geq  k-N_{\mathcal{O}}$ for some $N_\mathcal{O}>0$.

Therefore, for any $k\geq N_{\mathcal{O}}+1$, and any $\beta\in B^k$ such that $\chi \tilde{U}_\beta
(1-\chi_1)\chi_{t_0} E^0_h \neq O(h^\infty)$, $\beta$ must contain a word $\beta^1\in B_1^{k-N_{\mathcal{O}}}$. Furthermore, the number of $\beta$ corresponding to a given word $\beta^1$ is bounded independently of $k$. We deduce that
\begin{equation}\label{eq:BoundedWithPress}
\begin{aligned}
\sum_{\beta\in B^k} \|a_{\beta,\chi}\|^2_{L^2}&\leq \sum_{\beta^1 \in B_1^{k-N_\mathcal{O}}} ~~ \sum_{\beta\in B^k \text{containing $\beta^1$}} \|a_{\beta,\chi}\|^2_{L^2}\\
&\leq C \sum_{\beta^1 \in B_1^{k-N_\mathcal{O}}}~~ \sum_{\beta\in B^k \text{containing $\beta^1$}}  \exp\Big{[}\sum_{j=1}^{k-N_\mathcal{O}} S_{t_0}(V_{\beta^1_j})\Big{]}\\
&\leq  C' \sum_{\beta^1\in B_1^{k-N_\mathcal{O}}} \prod_{j=1}^{k-N_\mathcal{O}} \exp\Big{[} S_{t_0}(V_{\beta^1_j})\Big{]}\\
& = C' \left(  \sum_{b\in B_1}  \exp\Big{[} S_{t_0}(V_{b})\Big{]} \right)^{k-N_\mathcal{O}}\\
&\leq C' \exp\Big{[}(k-N_\mathcal{O})t_0 (\mathcal{P}(1) + 2\varepsilon_0)\Big{]}~~ \text{ thanks to \eqref{rainy}}.
\end{aligned}
\end{equation}

In particular, thanks to (\ref{eq : pression en 1}), we see that for any $M>0$, $\sum_{k=1}^{M |\log h|} \sum_{\beta \in B^k} \|\chi \tilde{U}_\beta
(1-\chi_1)\chi_{t_0} E^0_h\|_{L^2}^2$ is bounded independently of $h$.

\subsection{Regrouping the Lagrangian states}\label{regrouping}
From now on, we fix a compact set $\mathcal{K}\subset X$, and we take $\mathcal{O}\subset \mathcal{K}$.

In \cite[\S 5.1.1]{Ing2}, it is shown that there exists $\varepsilon_\mathcal{K}>0$ such that, if $\mathcal{O}\subset \mathcal{K}$ has a diameter smaller than $\mathcal{\epsilon}_\mathcal{K}$, it is possible to build  an equivalence relation $\sim_\mathcal{O}$ on the set\footnote{The condition that $\beta\in B^k$ is such that $\Phi_\beta(\Lambda_\xi\cap S^*\mathrm{supp}(\chi_{t_0}))\cap S^*\mathcal{O}\neq \emptyset$ implies, in the notations of \cite{Ing2}, that $\beta \in B_{k'}^{\chi_1}$ for some $k'\in \N$, so we can indeed apply the constructions of \cite[\S 5.1.1]{Ing2}.} 
$$\left\{\beta \in \bigcup_{k\in \mathbb{N}} B^k ; \Phi_\beta(\Lambda_\xi\cap S^*\mathrm{supp}(\chi_{t_0}))\cap S^*\mathcal{O}\neq \emptyset \right\}$$
with the following properties:
\begin{enumerate}
\item If $\beta\sim_\mathcal{O} \beta'$, then\footnote{This implication is proven in Lemma 5.3 of \cite{Ing2}. Note that, if the intersection the domains of $\varphi_{\beta,\mathcal{O}}$ and $\varphi_{\beta',\mathcal{O}}$ is non-empty, then the converse implication is true.} for all $x\in \mathcal{O}$ belonging to the domain of definition of $\varphi_{\beta,\mathcal{O}}$ and $\varphi_{\beta',\mathcal{O}}$, we have $\partial \varphi_{\beta,\mathcal{O}} = \partial \varphi_{\beta',\mathcal{O}}$.

\item Let us write $\tilde{\mathcal{B}}^\mathcal{O}:= \Big{(}\bigcup_{k\in \mathbb{N}} B^k\Big{)} \backslash \sim_\mathcal{O}$, and, for each $\tilde{\beta}\in \tilde{\mathcal{B}}^\mathcal{O}$:
\begin{equation}\label{rhodia}
\tilde{n}(\tilde{\beta}):= \left\lceil \frac{1}{t_0} \min \{n\in \mathbb{N};~~ \exists \beta \in B^n \text{ such that } \beta\in \tilde{\beta}\} \right\rceil.
\end{equation}
Then there exists $\mathcal{N}$ such that for each $k\in \N$ and each $\beta\in B^k$, we have
$$\beta \in \tilde{\beta} \Longrightarrow k \leq \tilde{n}(\tilde{\beta}) + \mathcal{N}.$$
This point is precisely \cite[Lemma 5.2]{Ing2}.

\item It is possible for each $\tilde{\beta}\in \tilde{\mathcal{B}}^\mathcal{O}$
 to build a phase function $\varphi_{\tilde{\beta}} : \mathcal{O}\rightarrow \mathbb{R}$ such that 
for every $\beta\in \tilde{\beta}$, 
we have $\partial \varphi_{\tilde{\beta}}(x)=\partial \varphi_{\beta,\mathcal{O}}(x)$ for every $x\in \spt(\varphi_{\beta,\mathcal{O}})$. 

\end{enumerate}

Note that the number of elements in $\{\tilde{\beta}\in\tilde{\mathcal{B}}^{\mathcal{O}}; \tilde{n}(\tilde{\beta})\leq N\}$ grows at most exponentially with $N$, thanks to point 2 above.

It follows from what precedes that there exists a constant $C_\mathcal{O}>0$ such that for all $\tilde{\beta}\neq \tilde{\beta}'\in \tilde{\mathcal{B}}^{\mathcal{O}}$, we have
\begin{equation}\label{hurry7}
|\partial \varphi_{\tilde{\beta}} (x)- \partial \varphi_{\tilde{\beta'}}(x)|\geq C_{\mathcal{O}} e^{- \sqrt{b_0} \max(\tilde{n}(\tilde{\beta}),\tilde{n}(\tilde{\beta'}))}. 
\end{equation}

 Let $\mathcal{O}\subset \mathcal{K}$ has diameter smaller than $\varepsilon_\mathcal{K}$, and let $\chi\in C_c^\infty(\mathcal{O})$.
For every $\tilde{\beta}\in \tilde{\mathcal{B}}^\mathcal{O}$, we set 
\begin{equation*}
a_{\tilde{\beta},\chi}:= \sum_{\beta\in \tilde{\beta}} a_{\beta,\chi} e^{i(\varphi_{\beta,\mathcal{O}}-\varphi_{\tilde{\beta}})/h}.
\end{equation*}
Then $a_{\tilde{\beta},\chi}\in S^{comp}(X)$, since the functions $\varphi_{\beta,\mathcal{O}}-\varphi_{\tilde{\beta}}$ are constant, and, for any $\varepsilon>0$ and $k\in \mathbb{N}$, there exists a $C_{\chi,\epsilon,k}>0$ such that
\begin{equation}\label{sheriff5}
\sum_{\substack{\tilde{\beta}\in \tilde{\mathcal{B}}^\mathcal{O}\\ \tilde{n}(\tilde{\beta})= n}} \|a_{\tilde{\beta},\chi}\|_{C^k}^2 \leq C_{\chi,\varepsilon,k} n^{2k}
e^{n(\mathcal{P}(1)+\varepsilon)}.
\end{equation}

\subsection{End of the proof of Theorem \ref{blacksabbath4}}
Let $0<M< \frac{1}{2\sqrt{b_0}}$, and let $k\in \N$ with $k t_0  \leq M |\log h|$. We have 
\begin{align*}
\chi (\chi_1 \tilde{U}(t_0))^k (1-\chi_1)\chi_{t_0} E^0_h &= \sum_{\tilde{\beta}\in \tilde{\mathcal{B}}^\mathcal{O}} \sum_{\beta\in B^k \cap \tilde{\beta}} \tilde{U}_\beta
(1-\chi_1)\chi_{t_0} E^0_h = \sum_{\substack{\tilde{\beta}\in \tilde{\mathcal{B}}^\mathcal{O} \\ \tilde{n}(\tilde{\beta})\leq k}}  e^{ \frac{i}{h} \varphi_{\tilde{\beta}}} a_{k,\tilde{\beta},\chi},
\end{align*}

where $a_{k,\tilde{\beta},\chi} = \sum_{\beta \in \tilde{\beta}\cap B^k}  a_{\beta,\chi} e^{i(\varphi_{\beta,\mathcal{O}}-\varphi_{\tilde{\beta}})/h}$. Note that, thanks to (\ref{eq:BoundedWithPress}), we have
$$\sum_{\tilde{\beta}\in \tilde{\mathcal{B}}^\mathcal{O}} \|a_{k,\tilde{\beta},\chi}\|_{L^2}^2 \leq C e^{ k t_0 \mathcal{P}(1) + 2 \varepsilon_0}.$$
In particular, by taking $\varepsilon_0$ small enough, we have, for any $\varepsilon>0$, 
$\sum_{\tilde{\beta}\in \tilde{\mathcal{B}}^\mathcal{O}} \|a_{k,\tilde{\beta},\chi}\|_{L^2}^2 \leq C_\varepsilon h^{M \mathcal{P}(1) - \varepsilon}.$

On the other hand, thanks to (\ref{hurry7}), if $\tilde{\beta} \neq \tilde{\beta'}$ satisfy $\tilde{n}(\tilde{\beta}), \tilde{n}(\tilde{\beta'})\leq k$, we have $|\partial \varphi_{\tilde{\beta}} (x)- \partial \varphi_{\tilde{\beta'}}(x)|\geq C_{\mathcal{O}} h^{\frac{1}{2}-\varepsilon}.$ Therefore, just as in the proof of Theorem \ref{th : L2}, non-stationary phase (Proposition \ref{nonstat}) allows us to conclude that
\begin{equation}\label{eq:BorneL2}
\left\|\chi (\chi_1 \tilde{U}(t_0))^k (1-\chi_1)\chi_{t_0} E^0_h\right\|_{L^2}^2 = O_\varepsilon \left( h^{M \mathcal{P}(1) - \varepsilon}\right).
\end{equation}

Now, recall that we took $\chi_1 \equiv 1$ on the support of $\chi$ so that (\ref{canardsauvage4}) holds. However, this assumption was not needed to obtain (\ref{eq:BoundedWithPress}), or in the construction of section \ref{regrouping}. Therefore, (\ref{eq:BorneL2}) holds as soon as $\chi$ has a small enough support, and we deduce that
$\left\|(\chi_1 \tilde{U}(t_0))^k (1-\chi_1)\chi_{t_0} E^0_h\right\|_{L^2}^2 = O_\varepsilon \left( h^{M \mathcal{P}(1)- \varepsilon}\right).$ In particular, using the triangular inequality and the fact that $\|\chi_1 \tilde{U}(t_0)\|_{L^2\rightarrow L^2} \leq 1$, we obtain that, for any $M_r >M$

$$\left\| \sum_{k= M |\log h|}^{M_r |\log h|} \chi (\chi_1 \tilde{U}(t_0))^k
(1-\chi_1)\chi_{t_0} E^0_h \right\|_{L^2} = O_\varepsilon \left(h^{\frac{M \mathcal{P}(1)}{2} - \varepsilon}\right).$$

Therefore, thanks to (\ref{canardsauvage4}), we deduce that
\begin{equation}\label{canardsauvage5}
\begin{aligned}
\chi E_h &=  \sum_{k=1}^{M|\log h| + \mathcal{N}}\sum_{\beta \in B^k} \tilde{U}_\beta
(1-\chi_1)\chi_{t_0} E^0_h + O_\varepsilon \left(h^{\frac{M \mathcal{P}(1)}{2} - \varepsilon}\right)
\\&= \sum_{\substack{\tilde{\beta}\in \tilde{\mathcal{B}}^\mathcal{O}\\
                \tilde{n}(\tilde{\beta})\leq M|\log h|}} a_{\tilde{\beta},\chi} e^{\frac{i}{h} \varphi_{\tilde{\beta}}} + \mathcal{R}_h + O_\varepsilon \left(h^{\frac{M \mathcal{P}(1)}{2} - \varepsilon}\right),
                \end{aligned}
\end{equation}
where the remainder $\mathcal{R}_h$ corresponds to the sum over the $\beta \in B^k$ for some $k\leq M |\log h| + \mathcal{N}$ which do not belong to some $\tilde{\beta}$ with $\tilde{n}(\tilde{\beta})\neq M |\log h|$. By point 2. in section \ref{regrouping}, we must have $k\geq M |\log h|$. Hence, the same argument as the one leading to (\ref{eq:BorneL2}) shows that $\|\mathcal{R}_h \|_{L^2} =O_\varepsilon \left(h^{\frac{M \mathcal{P}(1)}{2} - \varepsilon}\right)$, which concludes the proof.

\subsection{Proof of Proposition \ref{prop: small scale} and Corollary \ref{cor:borne Lp}}\label{sec: proof Lp}
\begin{proof}[Proof of Proposition \ref{prop: small scale}]
Recall that we take $r_h:= h^\alpha$, with $\alpha< \left[2\left( 1+ \frac{\sqrt{b_0} d}{|\mathcal{P}(1)|}\right)\right]^{-1}$. 

Take $M^{\varepsilon_0}:= \frac{d\alpha}{|\mathcal{P}(1)|} + \varepsilon_0$, with $\varepsilon_0$ small enough so that $M^{\varepsilon_0}< \frac{1}{2 \sqrt{b_0}}$.

Let $x_0\in X$. By Theorem \ref{blacksabbath4}, in a neighborhood of $x_0$, we may write $E_h$ as
$$E_h = S_h + R_h,$$
with $\|R_h\|_{L^2(X)} = O(h^{\frac{M^{\varepsilon_0}}{2}|\mathcal{P}(1)|- \varepsilon}) = o(r_h^{d/2})$ provided we take $\varepsilon< \frac{\varepsilon_0 |\mathcal{P}(1)|}{2}$,
and $S_h$ is a sum of Lagrangian states of the form
$$ S_h = \sum_{\substack{\tilde{\beta}\in \tilde{\mathcal{B}}\\
                \tilde{n}(\tilde{\beta})\leq M^{\varepsilon_0}|\log h|}} e^{i \varphi_{\tilde{\beta}}(x)/h}
a_{\tilde{\beta}}(x;h),
$$
with
\begin{equation}\label{sheriff4}
\sum_{\substack{\tilde{\beta}\in \tilde{\mathcal{B}}\\ \tilde{n}(\tilde{\beta})=n}} \|a_{\tilde{\beta}}\|_{L^2}^2 \leq C_{\varepsilon}
e^{n(\mathcal{P}(1)+\varepsilon)}.
\end{equation}
and there exists a constant $C>0$ such that for all $\tilde{\beta}\neq  \tilde{\beta}'\in \tilde{\mathcal{B}}$ with $\tilde{n}(\tilde{\beta})\leq M^{\varepsilon_0}|\log h|$, we have
\begin{equation}\label{hurry9}
|\partial \varphi_{\tilde{\beta}} (x)- \partial \varphi_{\tilde{\beta'}}(x)|\geq C h^{\sqrt{b_0}M^{\varepsilon_0}}. 
\end{equation}

We have 
\begin{equation*}
\begin{aligned}
\int_{B(x_0,r_h)} |E_h|^2 &\leq \int_{B(x_0,r_h)} |S_h|^2 +\int_{B(x_0,r_h)}( |R_h|^2 + 2 |S_h R_h|).
\end{aligned}
\end{equation*}
Therefore, if we can show that $\int_{B(x_0,r_h)} |S_h|^2 = O(r_h^d)$, then we will have that $\int_{B(x_0,r_h)} |E_h|^2 = O(r_h^d)$.

Let $\chi_h\in C_c^\infty(X;[0,1])$ be supported in $B(x_0,2r_h)$, and equal to one in $B(x_0,r_h)$, so that $\int_{B(x_0,r_h)} |S_h|^2\leq \int_X \chi_h |S_h|^2$.
We have 
\begin{equation}\label{eq: decompo integrale}
\begin{aligned}
\int_X \chi_h |S_h|^2 &= \sum_{\substack{\tilde{\beta}\in \tilde{\mathcal{B}}\\
                \tilde{n}(\tilde{\beta})\leq \tilde{M}^{\varepsilon}|\log h|}} \int_X \chi_h
|a_{\tilde{\beta}}(x;h)|^2 \\
&+ \sum_{\substack{\tilde{\beta}\neq \tilde{\beta}'\in \tilde{\mathcal{B}}\\
                \tilde{n}(\tilde{\beta}),\tilde{n}(\tilde{\beta}')\leq \tilde{M}^{\varepsilon}|\log h|}} \int_X \chi_h e^{i \varphi_{\tilde{\beta},\tilde{\beta}'}(x)/h}
a_{\tilde{\beta},\tilde{\beta}'}(x;h),
\end{aligned}
\end{equation}
where $\varphi_{\tilde{\beta},\tilde{\beta}'} = \varphi_{\tilde{\beta}} - \varphi_{\tilde{\beta}'}$ and $a_{\tilde{\beta},\tilde{\beta}'} = a_{\tilde{\beta}}\overline{a_{\tilde{\beta}'}}$.

From (\ref{eq: borne C^0 itere}), we see that 
\begin{equation*}
\int_X \chi_h |a_{\beta,\chi}|^2 \leq C r_h^d \exp\Big{[}\sum_{i=0}^{n_1(\beta)} S_{t_0}(V_{\beta^1_i})\Big{]},
\end{equation*}
so that by (\ref{eq : pression en 1}) and (\ref{rainy}), we see that the first sum in (\ref{eq: decompo integrale}) is a $O(r_h^d)$.

Let us consider the terms of the other sum in (\ref{eq: decompo integrale}). By a change of variables, they can be put in the form
$$ \int_{X} \chi_h a_{\beta,\beta'}(x;h) e^{i\varphi_{\beta,\beta'}(x)/h} \mathrm{d}x = r_h^d\int_{B(0,2)} \chi(x) a_{\beta,\beta'} \left(\exp_{x_0}(r_h x);h\right) e^{i \varphi_{\beta,\beta'}\left(\exp_{x_0}(r_h x)\right)/h} \mathrm{d}x,$$
with $\chi$ independent of $h$, supported in a ball of radius 2. By (\ref{hurry9}), the gradient of the phase is bounded from below by
$$r_h \times C h^{\sqrt{b_0}M^{\varepsilon_0}} = C h^{\alpha\left(1+ \frac{d\sqrt{b_0}}{|\mathcal{P}(1)|}\right) + \sqrt{b_0}\varepsilon_0},$$
so that, thanks to (\ref{eq:DefAlpha1}), for $\varepsilon_0$ small enough, we may apply Proposition \ref{nonstat}.  We deduce that this integral is $O(h^\infty)$. Since the number of terms in the second sum in (\ref{eq: decompo integrale}) is bounded by a power of $h$, we conclude that the second term in (\ref{eq: decompo integrale}) is a $O(h^\infty)$.

This proves the claim.
\end{proof}

Let us now prove Corollary \ref{cor:borne Lp}.
\begin{proof}[Proof of Corollary \ref{cor:borne Lp}]

Take $\alpha$ satisfying (\ref{eq:DefAlpha1}), and $r_h = h^\alpha$.
Let $x_0\in X$. We shall denote by $\exp_{x_0} : T_{x_0} X \simeq \R^d\rightarrow X$ the exponential map centered at $x_0$. Let $\psi \in C_c^\infty( \R^+; [0,1])$ be equal to one on $[0,1/2]$ and vanish on $[1,\infty)$. For $y\in \R^d$, we set 
$$\tilde{E}_h (y ; x_0) = \psi(|y|_{x_0}) E_h ( \exp_{x_0} (r_h y)).$$

By Proposition \ref{prop: small scale},  we have 
$$\|\tilde{E}_h (\cdot; x_0)\|^2_{L^2(\R^d)} \leq \int_{B(0,1)} |E_h ( \exp_{x_0} (r_h y))|^2 \mathrm{d}y \leq \frac{C}{r_h^d} \int_{B(x_0, r_h)} |E_h(x)|^2 \mathrm{d}x =O_{h\rightarrow 0} (1).$$

We define the operator $Q_h$ on $T_{x_0}^*X\simeq \R^d$.

$$Q_h := -\Big{(}\frac{h}{r_h}\Big{)}^2 \psi(|y|_{x_0}/10) \Big{(} \sum_{i,j} g^{i,j}(r_h y) \frac{\partial^2}{\partial y_i \partial y_j} + \frac{1}{D_g(r_h y)} \frac{\partial}{\partial y_i} \big{(} (D_g g^{i,j}) (r_h y) \big{)} \frac{\partial}{\partial y_j }\Big{)},$$
where $D_g := \sqrt{\det (g_{i,j})}$, and where $g^{i,j}$ are the coefficients of the metric in the coordinates $y= \exp_{x_0}^{-1}(x)$.

As shown in \cite[\S 3.4]{hezari2016quantum}, we have
$$(Q_h-1) \tilde{E}_h = O\Big{(} \frac{h}{r_h}\Big{)} \|\tilde{E}_h\|_{L^2}.$$

Therefore, we can apply \cite[Theorem 7.12]{Zworski_2012} to obtain
$$\|\tilde{E}_h\|_{L^\infty} \leq C \Big{(}\frac{h}{r_h}\Big{)}^{-(d-1)/2},$$
and (\ref{eq: borne Linfini}) follows.

Let $$p_d:= \frac{2(d+1)}{d-1}.$$
We may also apply \cite[Theorem 10.10]{Zworski_2012} to obtain 
$$\|\tilde{E}_h\|_{L^{p_d}} \leq C \Big{(}\frac{h}{r_h}\Big{)}^{-1/p_d}.$$

Now, by a change of variables, there exists $C>0$ such that
$$\int_{B(x_0, r_h)} |E_h|^{p_d} \leq C r_h^d \int_{B(0,10)} |\tilde{E}_h|^{p_d}(y) \mathrm{d}y.$$ 

We may then cover the support of $\chi$ by geodesic balls of radius $r_h$. Since the number of such balls is a $O(r_h^{-d})$, we obtain

$$\int_X \chi|E_h|^{p_d}\leq C \frac{r_h}{h},$$
which gives us the result.
\end{proof}
\begin{appendices}
  \renewcommand\thetable{\thesection\arabic{table}}
  \renewcommand\thefigure{\thesection\arabic{figure}}
  \section{Review of semiclassical analysis} \label{pivoine}
\subsection{Pseudodifferential calculus} \label{greve}

Let $X$ be a Riemannian manifold.
We will say that a function $a(x,\xi;h)\in C^{\infty}(T^*X\times (0,1])$ is in the class $S^{comp}(T^*X)$ if its support and of all its semi-norms are bounded independently of $h$.

\begin{definition}\label{defsymbclassique}
Let $a\in S^{comp}(T^*X)$. We will say that $a$ is a \emph{classical symbol} if there exists a sequence of symbols $a_k\in C^\infty(T^*X)$, independent of $h$, such that for any $n\in \mathbb{N}$, 
$$a-\sum_{k=0}^n h^k a_k \in h^{n+1} S^{comp}(T^*X).$$
We will then write $a^0(x,\xi):= \lim\limits_{h\rightarrow 0} a(x,\xi;h)$ for the \emph{principal symbol} of $a$.

We will sometimes write that $a\in S^{comp}(X)$ if it can be written as $a(x;h)= \tilde{a}_h(x)$,
where the functions $\tilde{a}_h\in C_c^\infty(X)$ have all their semi-norms and supports bounded independently of $h$.
\end{definition}

We associate to $S^{comp}(T^*X)$ the class of pseudodifferential operators
$\Psi_h^{comp}(X)$, through a surjective quantization map
\begin{equation*}Op_h:S^{comp}(T^*X)\longrightarrow \Psi^{comp}_h(X).
\end{equation*} This quantization
map is defined using coordinate charts, and the standard quantization
on $\mathbb{R}^d$. It is therefore not intrinsic, but another choice of quantization would only affect the resulting operator by a term $O_{L^2\rightarrow L^2}(h)$.

Let us detail this construction a bit more. Suppose that $\mathcal{U}\subset X$ is an open set small enough so that we can define a coordinate chart, i.e. a diffeomorphism $\gamma : \mathcal{U}\longrightarrow \mathcal{U}_0 \subset \R^d$. For each $x\in \mathcal{U}$, we denote its co-Jacobian matrix by $\partial \gamma (x)^T : T^*_x X \longrightarrow \R^d$. Let $\chi \in C_c^\infty( \mathcal{U})$. If $a\in S^{comp}(T^*X)$ and $f\in L^2(X)$, we define
\begin{equation}\label{eq:DefQuantification}
(Op_h(a) \chi f)(x):= \frac{1}{(2\pi h)^d} \int_X \mathrm{d}y \int_{T_x^* X} \mathrm{d}\xi e^{\frac{i}{h} (\gamma(x)-\gamma(y))\cdot (\partial \gamma (x)^T) (\xi) } a(x, \xi )  \chi(x) \chi(y) f(y) g_\gamma(y,\xi) \mathrm{d}y \mathrm{d}\xi,
\end{equation}
where $g$ is the smooth function such that $g_\gamma(y,\xi) \mathrm{d}y \mathrm{d}\xi$ is the pushforward of the Liouville measure by $ \R^d\times \R^d \ni (y,v)\mapsto \left(\gamma(y), (\partial \gamma(x)^T)^{-1}(v)\right)$.

The quantization $Op_h$ is then built by taking a locally finite partition of unity associated to sets $\mathcal{U}_i$ where coordinate charts are defined, and by summing terms of the form \eqref{eq:DefQuantification}.

For more details on this quantization procedure, we refer the reader
to \cite[Chapter 14]{Zworski_2012}.

For $a\in S^{comp}(T^*X)$, we say that its essential support is equal to a given
compact $K\Subset T^*X$,
\begin{equation*} \text{ ess supp}_h a = K \Subset T^*X,
\end{equation*}
if and only if, for all $\chi \in S(T^*X)$,
\begin{equation*}\spt \chi \subset (T^*X\backslash K) \Rightarrow \chi a \in h^\infty S(T^*
X).
\end{equation*}
For $A\in \Psi^{comp}_h(X), A=Op_h(a)$, we define the wave front set of $A$ as:
\begin{equation*}WF_h(A)= \text{ ess supp}_h a,
\end{equation*}
noting that this definition does not depend on the choice of the
quantization. When $K$ is a compact subset of $T^*X$ and $WF_h(A)\subset K$, we will sometimes say that $A$ is \emph{microsupported} inside $K$.

If $U,V$ are bounded open subsets of $T^*X$, and if $T,T' : L^2(X)\rightarrow L^2(X)$ are bounded operators, we shall say that $T\equiv T'$ \emph{microlocally} near $U\times V$ if there exist bounded open sets $\tilde{U}\supset \overline{U}$ and $\tilde{V} \supset \overline{V}$ such that for any $A,B\in \Psi_h^{comp}(X)$ with $WF_h(A)\subset \tilde{U}$ and $WF_h(B)\subset \tilde{V}$, we have
\begin{equation*}
A(T-T')B = O_{L^2\rightarrow L^2} (h^\infty).
\end{equation*}

\subsubsection*{Tempered distributions}
Let $u=(u(h))$ be an $h$-dependent family of distributions in $\mathcal{D}'(X)$. We say it is \emph{$h$-tempered} if for any bounded open set $U\subset X$, there exists $C\bel 0$ and $N\in \mathbb{N}$ such that
\begin{equation*}
\|u(h)\|_{H_h^{-N}(U)}\leq C h^{-N},
\end{equation*}
where $\|\cdot\|_{H_h^{-N}(U)}$ is the semiclassical Sobolev norm.

For a tempered distribution $u=(u(h))$, we say that a point $\rho\in T^*X$ does not lie in the wave front set $WF_h(u)$ if there exists a neighborhood $V$ of $\rho$ in $T^*X$ such that for any $A\in \Psi_h^{comp}(X)$ with $WF_h(A)\subset V$, we have $Au=O(h^\infty)$.

Let us now recall Egorov's theorem (see \cite[Theorem 11.1]{Zworski_2012} or \cite[Proposition 3.8]{DG}). Recall that $U(t)$ is the semiclassical Schrödinger propagator $U(t)= e^{it h \Delta}$.

\begin{proposition}[Egorov's theorem]\label{prop:Egorov}
Let $A\in \Psi_h^{comp}(X)$ and let $t\in \R$. There exists $A_t\in \Psi_h^{comp}(X)$ such that $U(-t) A U(t) = A_t + O_{L^2\to L^2} (h^\infty)$. Furthermore, $WF_h(A_t) = \Phi^t (WF_h(A))$.

In particular, if $t\in \R$ and $A,B\in \Psi_h^{comp}(X)$ are such that $\Phi^t( WF_h(A))\cap WF_h(B)=\emptyset$, then we have
\begin{equation*}
A U(t) B= O_{L^2\rightarrow L^2}(h^\infty).
\end{equation*}
\end{proposition}

\subsubsection*{Non-stationary phase}
Let $a,\phi\in S^{comp}(X)$, with $\spt(a)\subset \spt(\phi)$ and $\phi$ real-valued.
We consider the oscillatory integral:
$$I_h(a,\phi):= \int_{X} a(x) e^{\frac{i\phi(x,h)}{h}} \mathrm{d}x.$$

The following result is classical, and its proof similar to that of \cite[Lemma 3.12]{Zworski_2012}.
\begin{proposition}[Non stationary phase]\label{nonstat}
Let $\epsilon>0$. Suppose that there exists $C>0$ such that, $\forall x\in \spt(a), \forall  0<h<h_0$, $|\partial \phi(x,h)|\geq C h^{1/2-\epsilon}$. Then
$$I_h(a,\phi)=O(h^\infty).$$
\end{proposition}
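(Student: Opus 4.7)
The plan is to follow the classical non-stationary phase argument by repeated integration by parts, carefully tracking the $h$-dependence since the lower bound $|\partial\phi|\geq Ch^{1/2-\epsilon}$ is substantially weaker than the usual $O(1)$. Working in a coordinate chart containing $\spt(a)$ (which suffices after a partition of unity, as $a$ is compactly supported), I would introduce the first-order differential operator
\[
L := \frac{h}{i}\,\frac{\partial\phi}{|\partial\phi|^{2}}\cdot\partial,
\]
which is smooth on $\spt(a)$ by hypothesis and satisfies $L(e^{i\phi/h}) = e^{i\phi/h}$. Writing ${}^{t}L$ for the formal transpose, $N$ integrations by parts against the smooth, compactly supported amplitude $a$ yield
\[
I_h(a,\phi) = \int_{X} ({}^{t}L)^{N} a \cdot e^{i\phi(x,h)/h}\,\mathrm{d}x.
\]
It therefore suffices to prove the pointwise bound $|({}^{t}L)^N a|\leq C_N\, h^{2N\epsilon}$ on $\spt(a)$: taking $N$ as large as desired then forces $I_h(a,\phi) = O(h^\infty)$.

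The single-step estimate is the heart of the matter. Expanding the divergence,
\[
{}^{t}L\,b = -\frac{h}{i}\Big(\frac{\Delta\phi}{|\partial\phi|^{2}}\,b + \frac{\partial\phi}{|\partial\phi|^{2}}\cdot\partial b + b\,\partial\phi\cdot \partial\bigl(|\partial\phi|^{-2}\bigr)\Big),
\]
and since $\phi$ and $a$ lie in $S^{comp}$, all of their derivatives are $O(1)$ uniformly in $h$. The hypothesis gives $|\partial\phi|^{-2}\leq C h^{-1+2\epsilon}$. The third term is the subtle one: a direct computation yields $\partial\phi\cdot\partial(|\partial\phi|^{-2}) = -\,|\partial\phi|^{-4}\,(\partial\phi)^{T}(\partial^{2}\phi)(\partial\phi)$, so two of the factors of $\partial\phi$ in the numerator combine with the denominator to produce an $O(|\partial\phi|^{-2})$ contribution rather than an $O(|\partial\phi|^{-3})$ one. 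Putting these estimates together, $|{}^{t}L\,b|\leq C\, h\cdot h^{-1+2\epsilon}\cdot\max(|b|,|\partial b|) = C\,h^{2\epsilon}\max(|b|,|\partial b|)$.

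I would then prove by induction on $N$ that $({}^{t}L)^N a$ is a finite sum of terms of the form $h^{N} g_\alpha(x;h)/|\partial\phi(x;h)|^{2N}$, where each $g_\alpha$ is a polynomial in the derivatives of $\phi$ and $a$ of order $O(N)$, hence bounded uniformly in $h$ by the $S^{comp}$ hypothesis. The bound $|\partial\phi|^{-2N}\leq C^{N} h^{-N(1-2\epsilon)}$ then gives $|({}^{t}L)^N a|\leq C_N h^{N-N(1-2\epsilon)} = C_N h^{2N\epsilon}$, and letting $N\to\infty$ establishes $I_h(a,\phi) = O(h^\infty)$.

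The only place where the argument departs from the textbook non-stationary phase lemma, and the step requiring the most care, is verifying the inductive structure above: when ${}^{t}L$ acts on a factor $|\partial\phi|^{-2k}$ produced by an earlier iteration, one must check that the exponent in the denominator grows by exactly $2$ (the new $|\partial\phi|^{-2}$ coming from $L$), and not by more. This cancellation follows from the identity $\partial(|\partial\phi|^{-2k}) = -k\,|\partial\phi|^{-2k-2}\,\partial(|\partial\phi|^{2})$ together with $\partial(|\partial\phi|^{2}) = 2(\partial^{2}\phi)(\partial\phi)$, which supplies exactly one extra factor of $\partial\phi$ to absorb one power of the denominator when paired with the $\partial\phi$ already appearing in $L$. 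Once this bookkeeping is in place, each iteration of ${}^{t}L$ rigorously gains $h^{2\epsilon}$ and the proposition follows.
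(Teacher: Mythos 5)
Your argument is correct and is exactly the route the paper points to (Zworski, Lemma~3.12), with the $h$-dependence of $|\partial\phi|$ tracked through $N$-fold integration by parts against $L=\frac{h}{i}\frac{\partial\phi}{|\partial\phi|^{2}}\cdot\partial$. One phrasing to tighten: $({}^{t}L)^{N}a$ is not literally a sum of terms $h^{N}g_{\alpha}/|\partial\phi|^{2N}$ with $g_{\alpha}$ a \emph{polynomial} in derivatives of $\phi$ and $a$, since the piece of ${}^{t}L$ where $\partial$ hits an existing power $|\partial\phi|^{-2k}$ drives the denominator exponent up by $4$ rather than $2$ while supplying $(\partial\phi)^{T}(\partial^{2}\phi)(\partial\phi)$ in the numerator; the correct inductive invariant is that each monomial is of the form $h^{N}P/|\partial\phi|^{2M}$ with $M\geq N$ and $P$ containing at least $2(M-N)$ factors of components of $\partial\phi$, which is preserved by ${}^{t}L$ --- this is precisely the cancellation your final paragraph records, and it yields $|({}^{t}L)^{N}a|\leq C_{N}h^{2N\epsilon}$ as needed.
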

\subsection{Lagrangian distributions and Fourier Integral Operators}\label{DSK}
Let us now recall some facts about Lagrangian distributions and Fourier Integral Operators, following \cite[\S 3.2]{DG}.
\subsubsection*{Phase functions}
Let $\phi(x,\theta)$ be a smooth real-valued function on some open subset $U_\phi$ of $X\times \mathbb{R}^L$, for some $L\in \mathbb{N}$. We call $x$ the \emph{base variables} and $\theta$ the \emph{oscillatory variables}. We say that $\phi$ is a \emph{nondegenerate phase function} if the differentials $\mathrm{d}(\partial_{\theta_1} \phi),..., \mathrm{d}(\partial_{\theta_L}\phi)$ are linearly independent on the \emph{critical set }
\begin{equation*}
C_\phi:=\{ (x,\theta); \partial_\theta \phi =0 \} \subset U_\phi.
\end{equation*}
In this case
\begin{equation*}
\Lambda_\phi:= \{(x,\partial_x \phi(x,\theta)); (x,\theta)\in C_\phi \} \subset T^*X
\end{equation*}
is an immersed Lagrangian manifold. By shrinking the domain of $\phi$, we can make it an embedded Lagrangian manifold. We say that $\phi$ \emph{generates} $\Lambda_\phi$.

\subsubsection*{Lagrangian distributions}
Given a phase function $\phi$ and a symbol $a\in S^{comp}(U_\phi)$, consider the $h$-dependent family of functions
\begin{equation}\label{massai}
u(x;h)= h^{-L/2} \int_{\mathbb{R}^L} e^{i\phi(x,\theta)/h} a(x,\theta;h) \mathrm{d}\theta.
\end{equation}
We call $u=(u(h))$ a \emph{Lagrangian distribution}, (or a \emph{Lagrangian state}) generated by $\phi$. By the method of non-stationary phase, if $\spt  (a)$ is contained in some $h$-independent compact set $K\subset U_\phi$, then
\begin{equation*}
WF_h(u)\subset \{(x,\partial_x \phi(x,\theta)); (x,\theta)\in C_\phi\cap K\}\subset \Lambda_\phi.
\end{equation*}

\begin{definition}\label{Grenoble}
Let $\Lambda\subset T^*X$ be an embedded Lagrangian submanifold. We say that an $h$-dependent family of functions $u(x;h)\in C_c^\infty(X)$ is a (compactly supported and compactly microlocalized) \emph{Lagrangian distribution associated to $\Lambda$}, if it can be written as a sum of finitely many functions of the form (\ref{massai}), for different phase functions $\phi$ parametrizing open subsets of $\Lambda$, plus an $O(h^\infty)$ remainder. We will denote by $I^{comp}(\Lambda)$ the space of all such functions.
\end{definition}

\subsubsection*{Fourier integral operators}
Let $X, X'$ be two manifolds of the same dimension $d$, and let $\kappa$ be a symplectomorphism from an open subset of $T^*X$ to an open subset of $T^*X'$. Consider the Lagrangian
\begin{equation*}
\Lambda_\kappa =\{(x,\nu;x',-\nu'); \kappa(x,\nu)=(x',\nu')\}\subset T^*X\times T^*X'= T^*(X\times X').
\end{equation*}
A compactly supported operator $U:\mathcal{D}'(X)\rightarrow C_c^\infty(X')$ is called a (semiclassical) \emph{Fourier integral operator} associated to $\kappa$ if it can be written as an operator which is a $O_{\mathcal{D}'(X)\rightarrow C_c^\infty(X')}(h^\infty)$ plus an operator whose Schwartz kernel $K_U(x,x')$ lies in $h^{-d/2}I^{comp}(\Lambda_\kappa)$. We write $U\in I^{comp}(\kappa)$. The $h^{-d/2}$ factor is explained as follows: the normalization for Lagrangian distributions is chosen so that $\|u\|_{L^2}\sim 1$, while the normalization for Fourier integral operators is chosen so that $\|U\|_{L^2(X)\rightarrow L^2(X')} \sim 1$.

Note that if $\kappa\circ \kappa'$ is well defined, and if $U\in I^{comp}(\kappa)$ and $U'\in I^{comp}(\kappa')$, then $U\circ U'\in I^{comp} (\kappa\circ \kappa')$.

If $U\in I^{comp}(\kappa)$ and $O\subset T^*X$ is an open bounded set, we shall say that $U$ is \emph{microlocally unitary }near $O$ if $U^* U \equiv I_{L^2(X)\rightarrow L^2(X)}$ microlocally near $O\times \kappa (O)$.

\subsection{Iterations of Fourier integral operators}\label{bismark}
We recall here the main results from \cite[\S 4]{NZ} concerning the
iterations of semiclassical Fourier integral operators in $T^*\mathbb{R}^d$.

Let $V\subset T^* \mathbb{R}^d$ be an open neighborhood of $0$, and take
a sequence of symplectomorphisms $(\kappa_i)_{i=1,...,N}$ from $V$ to
$T^*\mathbb{R}^d$, such that $\forall i \in \{1,...,N\}$, we have
$\kappa_i(0)\in V$, and the following projection:
\begin{equation*}(x_1,\xi_1 ; x_0, \xi_0) \mapsto (x_1,\xi_0) \text{    where    }
(x_1,\xi_1)= \kappa (x_0, \xi_0 ) 
\end{equation*}
is a diffeomorphism close to the origin.
We consider Fourier integral operators $(T_i)$ associated to $\kappa_i$
and which are
microlocally unitary near an open set $U\times U$, where $U \Subset V$ which
contains the origin. Let $\Omega\subset \mathbb{R}^d$ be an open set such
that $U\Subset T^*\Omega$, and, for all $i$, $\kappa_i(U) \Subset T^*\Omega$.
For each $i$, we take a smooth cut-off function $\chi_i\in C_c^\infty (U ;
[0,1] )$, and let
\begin{equation} \label{blabla}
S_i := Op_h(\chi_i) \circ T_i.
\end{equation}
Let us consider a family of Lagrangian manifolds $\Lambda_k = \{
(x,\phi_k'(x)) ; x\in \Omega\} \subset T^*\mathbb{R}^d, ~~ k=0,...,N$ such
that:
\begin{equation} \label{leffe}
|\partial^\alpha \phi_k|\leq C_\alpha,~~~~0\leq k\leq N~~ \alpha\in
\mathbb{N}^d.
\end{equation}
We assume that there exists a sequence of integers $(i_k\in
\{1,...,J\})_{k=1,...,N}$ such that
\begin{equation}\label{eq:Caseprojettebien}
\kappa_{i_{k+1}}(\Lambda_k\cap U) \subset \Lambda_{k+1},~~k=0,...,N-1.
\end{equation}

We define $g_k$ by
\begin{equation*}g_k(x) = \pi\circ \kappa_{i_k}^{-1}(x,\phi_k'(x)).
\end{equation*}
That is to say,
$\kappa_{i_k}^{-1}(x,\phi_k'(x))=(g_k(x),\phi_{k-1}'(g_k(x)))$.

We will say that a point $x\in \Omega$ is $N$-admissible if we can define
recursively a sequence by $x^N=x$, and, for $k=N,...,1$,
$x^{k-1}=g_k(x^k)$. This procedure is possible if, for any $k$, $x^k$ is
in the domain of definition of $g_k$.

Let us assume that, for any admissible sequence $(x^N, ...,x^0)$, the
Jacobian matrices are uniformly bounded from above:
\begin{equation*}\Big{\|}\frac{\partial x^k}{\partial x^l}\Big{\|}=\Big{\|}\frac{\partial
(g_{k+1}\circ g_{k+2}\circ ...\circ g_l)}{\partial x^l}(x^l)\Big{\|}\leq
C_D,~~~0\leq k<l\leq N,
\end{equation*}
where $C_D$ is independent on $N$. This assumption roughly says that the
maps $g_k$ are (weakly) contracting.

We will also use the notation
\begin{equation*}D_k:= \sup\limits_{x\in \Omega} |\det
dg_k(x)|^{1/2},~~~J_k:=\prod_{k'=1}^k D_{k'}, 
\end{equation*}
and assume that the $D_k$'s are uniformly bounded: $1/C_D\leq D_k\leq C_D$.

The following result can be found in \cite[Proposition 4.1]{NZ} (see also \cite[Remark 4.1]{NZ} for the $C^\ell$ bounds on the remainders).

\begin{proposition} \label{haroun}
We use the above definitions and assumptions, and take $N$ arbitrarily large,
possibly varying with $h$.
Take any $a\in C_c^\infty(\Omega)$ and consider the Lagrangian state $u=a
e^{i\phi_0/h}$ associated with the Lagrangian $\Lambda_0$. Then we may
write:
\begin{equation*} (S_{i_N}\circ...\circ S_{i_1}) (a e^{i\phi_0/h})(x) = e^{i\phi_N(x)/h}
\big{(} \sum_{j=0}^{L-1} h^j a_j^N(x) + h^L R_L^N(x,h)\big{)},
\end{equation*}
where each $a_j^N\in C_c^\infty(\Omega)$ depends on $h$ only through $N$, and
$R_L^N\in C^\infty ((0,1]_h,\mathcal{S}(\mathbb{R}^d))$.
If $x^N\in \Omega$ is $N$-admissible, and defines a sequence $(x^k),
k=N,...,1$, then
\begin{equation*}|a_0^N(x^N)| = \Big{(}\prod_{k=1}^N \chi_{i_k} (x^k,\phi_k' (x^k))|\det
dg_k(x^k)|^{\frac{1}{2}}\Big{)} |a(x^0)|,
\end{equation*}
otherwise $a^N_j(x^N)=0, ~~ j=0,...,L-1$.
We also have the bounds
\begin{equation}\label{republique}
\|a_j^N\|_{C^\ell(\Omega)}\leq C_{j,\ell} J_N(N+1)^{\ell+3j}
\|a\|_{C^{\ell+2j}(\Omega)},~~~j=0,...,L-1, \ell\in \mathbb{N},
\end{equation}
\begin{equation}\label{coligny}\|R_L^N\|_{L^2(\mathbb{R}^d)}\leq C_L
\|a\|_{C^{2L+d}(\Omega)}(1+C_0h)^N\sum_{k=1}^N J_k k^{3L+d},
\end{equation}
\begin{equation}\label{antigna}
\|R_L^N\|_{C^\ell(\mathbb{R}^d)}\leq C_{L,l} h^{-d/2-\ell}
\|a\|_{C^{2L+d}(\Omega)}(1+C_0h)^N\sum_{k=1}^N J_k k^{3L+d}.
\end{equation}

The constants $C_{j,\ell},C_0$ and $C_L$ depend on the constants in
(\ref{leffe}) and on the operators $\{S_j\}_{j=1}^J$.
\end{proposition}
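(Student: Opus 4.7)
The plan is to proceed by induction on $N$, using Lemma \ref{pacal} as the one-step building block. For the base case $N=1$, we apply Lemma \ref{pacal} to the operator $T_{i_1}$ acting on $ae^{i\phi_0/h}$ (the hypothesis $\kappa_{i_1}(\Lambda_0\cap U)\subset\Lambda_1$ together with the uniform $C^\ell$ bounds (\ref{leffe}) guarantee that $\Lambda_1$ is indeed the graph of $d\phi_1$), then multiply from the left by the pseudodifferential operator $Op_h(\chi_{i_1})$ and use standard symbolic calculus (see \cite[Chapter 14]{Zworski_2012}) to see that $Op_h(\chi_{i_1})$ preserves the Lagrangian-state form with phase $\phi_1$, simply multiplying the principal symbol by $\chi_{i_1}(x,\phi_1'(x))$ and producing a subprincipal correction in $hS^{comp}$.

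For the inductive step, assume the expansion holds at rank $N-1$ with phase $\phi_{N-1}$ and symbols $a_j^{N-1}$. We then apply $S_{i_N}$ to this Lagrangian state; by the hypothesis $\kappa_{i_N}(\Lambda_{N-1}\cap U)\subset \Lambda_N$ and Lemma \ref{pacal}, each of the finitely many terms $h^j a_j^{N-1} e^{i\phi_{N-1}/h}$ is converted into a Lagrangian state with phase $\phi_N$, and we reorganize the resulting sum by powers of $h$ up to order $L$. The explicit formula for $a_0^N$ then follows by iterating (\ref{shotgun}): at each step the principal symbol picks up a factor $\chi_{i_k}(x^k,\phi_k'(x^k))|\det dg_k(x^k)|^{1/2}$ evaluated at the preimage $x^k=g_{k+1}\circ\cdots\circ g_N(x^N)$, and the phase factor $e^{i\theta/h}$ only affects the absolute value in the statement. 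If $x^N$ is not $N$-admissible, then at some step $k$ the point $x^k$ falls outside the support of $\chi_{i_k}$ (or outside the domain where $g_k$ is defined), and the product vanishes.

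The $C^\ell$ bounds (\ref{republique}) are obtained by induction on $N$ using the two estimates of Lemma \ref{pacal}. The key point is the weak contraction hypothesis $\|\partial x^k/\partial x^l\|\leq C_D$: when we differentiate a product of the form $\prod_{k=1}^N f_k(x^k)$ with the chain rule, the Faà di Bruno combinatorics combined with uniformly bounded derivatives of the maps $g_k$ (these $C^\ell$ bounds on $g_k$ come from (\ref{leffe}) since $g_k$ is determined by the generating function of $\kappa_{i_k}$ and by $\phi_k$) produces a factor polynomial in $N$ of degree $\ell+3j$, not exponential. This is precisely the main obstacle: a naive estimate that ignores weak contraction would lead to a $C^\ell$ norm growing like $C^N$, which would ruin the applications of the proposition in section \ref{sec:proof} where $N$ is of order $|\log h|$. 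We carefully track that each differentiation of $a_{j}^N$ brings in at most a factor linear in $N$ (once for the chain rule, once for the three-fold loss from Lemma \ref{pacal} at each step), and the overall factor $J_N=\prod_{k=1}^N D_k$ absorbs the amplitude growth.

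Finally, the remainder bound (\ref{coligny}) is obtained by writing the Duhamel-like telescoping
\[
S_{i_N}\circ\cdots\circ S_{i_1}(ae^{i\phi_0/h}) - \sum_{j=0}^{L-1} h^j a_j^N e^{i\phi_N/h}
= \sum_{k=1}^N S_{i_N}\circ\cdots\circ S_{i_{k+1}}\bigl(h^L r_L^{(k)} e^{i\phi_k/h}\bigr),
\]
where $r_L^{(k)}$ is the remainder produced by the $k$-th application of Lemma \ref{pacal}, bounded in $C^{2L+d}$ by $C_L\|a_j^{k-1}\|_{C^{2L+d+2L}}\leq C_L' J_{k-1}k^{3L+d}\|a\|_{C^{2L+d}}$ via (\ref{republique}). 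Each remaining propagator $S_{i_N}\circ\cdots\circ S_{i_{k+1}}$ is bounded in $L^2\to L^2$ by $(1+C_0 h)^{N-k}$ (the small perturbation of the microlocally unitary operators $T_{i_k}$ by the cutoffs $\chi_{i_k}$), summing to the bound (\ref{coligny}). The $C^\ell$ bound (\ref{antigna}) then follows from (\ref{coligny}) by the semiclassical Sobolev embedding, costing the factor $h^{-d/2-\ell}$.
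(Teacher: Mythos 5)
The paper itself contains no proof of Proposition \ref{haroun}: it is quoted verbatim from \cite[Proposition 4.1]{NZ} (just as Lemma \ref{pacal} is their Lemma 4.1), so there is no internal argument to compare against. Your outline reconstructs the strategy of the original proof in \cite{NZ}: iterate the one-step Lemma \ref{pacal}, obtain the principal-symbol formula by composing (\ref{shotgun}) along the admissible chain $x^N,\dots,x^0$, keep the $C^\ell$ growth polynomial in $N$ via the uniform bound on the Jacobians $\partial x^k/\partial x^l$ (the weak contraction hypothesis), telescope the order-$h^L$ errors through the remaining propagators to get (\ref{coligny}), and pass to (\ref{antigna}) by semiclassical Sobolev embedding for compactly microlocalized functions. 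That is the right route, and at the level of structure it is the same as the cited proof.

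Two quantitative points in your sketch are asserted rather than established and would need tightening to recover the stated constants. First, the bound $\|S_{i_N}\circ\cdots\circ S_{i_{k+1}}\|_{L^2\to L^2}\le (1+C_0h)^{N-k}$ is not automatic from ``small perturbation of microlocally unitary operators'': one needs $\|Op_h(\chi_i)\|_{L^2\to L^2}\le 1+Ch$ (sharp G\aa rding, using $0\le\chi_i\le1$) combined with the microlocal unitarity of $T_i$ near $U\times\kappa_i(U)$ and a wave-front-set argument to dispose of the region where $T_i$ is not controlled. Second, your bookkeeping in the telescoping step ($\|a_j^{k-1}\|_{C^{2L+d+2L}}$ claimed to be controlled by $\|a\|_{C^{2L+d}}$) does not follow from (\ref{republique}), which would produce $\|a\|_{C^{2L+d+2j}}$; to land exactly on $\|a\|_{C^{2L+d}}$ and the factor $k^{3L+d}$ one must expand the term $h^j a_j^{k-1}$ only to order $L-j$ at the $k$-th application of Lemma \ref{pacal}, so that the norm needed is $\|a_j^{k-1}\|_{C^{2(L-j)+d}}\le C J_{k-1}k^{2L+j+d}\|a\|_{C^{2L+d}}$ with $2L+j+d\le 3L+d$. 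Also note that the proposition asserts $a_j^N(x^N)=0$ for \emph{all} $j\le L-1$ at non-admissible points, not only for $j=0$; this follows from support considerations (the higher symbols are built from derivatives of lower ones multiplied by the cutoffs $\chi_{i_k}$ along the chain), which is worth saying explicitly. With these repairs, your argument is essentially the proof given in \cite{NZ}.
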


We shall be using this proposition in the case where for all $k$, we have $D_k\leq \nu < 1$. In this case, the estimates (\ref{republique}), (\ref{coligny}) and (\ref{antigna}) imply that for any $\ell\in \mathbb{N}$, there exists $C_{\ell}$ independent of $N$ such that for any $N\in \mathbb{N}$, we have
\begin{equation}\label{highway78}
\|a^N\|_{C^\ell}\leq \|a^N_0\|_{C^\ell} \big{(} 1+ C_\ell h \big{)}.
\end{equation}

\end{appendices}

\bibliographystyle{alpha}
\bibliography{references}

\providecommand{\bysame}{\leavevmode\hbox to3em{\hrulefill}\thinspace}
\providecommand{\MR}{\relax\ifhmode\unskip\space\fi MR }
\providecommand{\MRhref}[2]{%
  \href{http://www.ams.org/mathscinet-getitem?mr=#1}{#2}
}
\providecommand{\href}[2]{#2}

\end{document}